\newtheorem{theorem}{Theorem}[section]
\newtheorem{corollary}{Corollary}[theorem]
\newtheorem{lemma}[theorem]{Lemma}
\newtheorem{proposition}[theorem]{Proposition}
\newtheorem{definition}{Definition}[section]
\newtheorem{example}{Example}[section]
\newcommand{\R}{\mathbb{R}}
\newcommand{\D}{\mathbb{D}}
\newcommand{\Y}{\mathcal{Y}}
\newcommand{\X}{\mathcal{X}}
\newcommand{\y}{\boldsymbol{y}}
\newcommand{\x}{\boldsymbol{x}}
\newcommand{\e}{\boldsymbol{e}}
\newcommand{\z}{\boldsymbol{z}}
\newcommand{\bv}{\boldsymbol{v}}
\newcommand{\bu}{\boldsymbol{u}}
\newcommand{\A}{\boldsymbol{A}}
\newcommand{\I}{\boldsymbol{I}}
\DeclareMathOperator*\bigcircop{\bigcirc}
\title{To be or not to be stable, that is the question: understanding neural networks for inverse problems}
\author{Davide Evangelista \\
Department of Mathematics, \\ University of Bologna, Italy \\ \texttt{davide.evangelista5@unibo.it}. \\
\And Elena Loli Piccolomini \\
Department of Computer Science and Engineering, \\ University of Bologna, Italy.
\And Elena Morotti \\
Department of Political and Social Sciences, \\ University of Bologna, Italy. \\
\And James Nagy \\
Department of Mathematics, \\ Emory University, Atlanta.}
\date{}
\begin{document}
\maketitle

\begin{abstract}
    The solution of linear inverse problems arising, for example, in signal and image processing is a challenging problem since the ill-conditioning amplifies, in the solution, the noise present in the data. Recently introduced algorithms based on deep learning overwhelm the more traditional model-based approaches in performance, but they typically suffer from instability with respect to data perturbation. In this paper, we theoretically analyze the trade-off between stability and accuracy of neural networks, when used to solve linear imaging inverse problems for not under-determined cases. Moreover, we propose different supervised and unsupervised solutions to increase the network stability and maintain a good accuracy, by means of regularization properties inherited from a model-based iterative scheme during the network training and pre-processing stabilizing operator in the neural networks.
    Extensive numerical experiments on image deblurring confirm the theoretical results and the effectiveness of the proposed deep learning-based approaches to handle noise on the data.
\end{abstract}

\keywords{Neural Networks Stability \and Linear Inverse Problems \and Deep Learning Algorithms \and Image Deblurring \and trade-off accuracy stability}

% % --------------------------------------------------------------
% %           Inclusion of external files
% % --------------------------------------------------------------
\section{Introduction\label{sec:intro}}

Linear inverse problems of the form:
\begin{align}\label{eq:disc_fred_eq}
    \y = \A\x,
\end{align}
where $\A \in \R^{m \times n}$ is a full-rank matrix discretizing a linear operator,  $\x \in \R^{n}$ and  $\y \in \R^{m}$ with $m \geq n$, arise in various image processing tasks, such as deblurring or tomographic reconstruction \cite{hansen1998rank, hansen2010discrete, mueller2012linear}. 
It is well-known that in these applications, equation \eqref{eq:disc_fred_eq} represents the discretization of an ill-posed problem.
Following the well-known Hadamard definition, a problem is ill-posed if either a solution does not exist, the solution is not unique or it does not continuously depend on the data $\y$. 
In the case considered in \eqref{eq:disc_fred_eq} the third condition holds, thereby the computation of the solution becomes very challenging when noise affects the data.
In this work, we consider data corrupted by Gaussian noise, i.e.:
\begin{equation}\label{eq:forward_problem}
    \y^\delta = \A\x^{gt} + \e, \qquad \e \sim \mathcal{N}(\mathbf{0}, \delta^2 \I);
\end{equation}
where $\delta$ denotes the standard deviation of the white additive Gaussian noise, $\I$ is the identity matrix, and $\x^{gt}$ is the ground truth, clean image.

Traditional regularization approaches tackle problem \eqref{eq:forward_problem} as the minimization of an objective function containing a data-fit term and a regularization prior, with possible further constraints on the solution \cite{bertero2021introduction, engl1996regularization}. 
These terms theoretically grant stability, but, in general, the computational time required by solvers is high and it may be necessary to choose many parameters, tuning them by trial and error on the data.

In the last few years, neural networks have been introduced with great success for the solution of problem \eqref{eq:forward_problem},  since they are capable of achieving greater accuracy than iterative regularized methods \cite{on_deep_learning_for_inverse_problems, cascarano2022plug, solvability_of_inverse_problems_in_medical_imaging}. 
However, noise-related issues still persist, as their high accuracy is obtained at the expense of robustness against noise in the input data. % the heightened accuracy of neural networks often comes at the cost of reduced robustness against noise in input data. In particular, they often produce poor results when applied to data corrupted by noise different from the one learned in the training phase. Such a feature is usually referred to as network instability.
Specifically, these networks frequently yield suboptimal results when applied to data contaminated with noise that differs from that encountered during the training phase. This tendency is commonly referred to as network {\em instability}.\\
Some authors have already studied the behavior of neural networks in the presence of noise on the data, focusing on the solution of under-determined imaging inverse problems (i.e. when $m<n$ in equation \eqref{eq:disc_fred_eq}) \cite{robustness_included, stabilizing_deep_tomographic_reconstruction_A, stabilizing_deep_tomographic_reconstruction_B, antun2020instabilities, darestani2021measuring, huang2018some, johnson2021evaluation, morshuis2022adversarial, muckley2021results, pal2022review, shimron2022implicit, yu2022validation, zhang2021instabilities, troublesome_kernel, can_stable_network_be_computed}. 
We note that the paper \cite{troublesome_kernel} offers a comprehensive bibliography on this topic, with the authors remarking that``{\it stability implies a universal barrier on performance}''.
However, to the best of our knowledge, a mathematically grounded understanding is still lacking and no works address the case of $m\ge n$.\\

\paragraph{Contributions} In this work, we look at neural networks as solvers of discrete ill-posed problems, and we contribute to the state-of-art as follows.

Firstly, we adapt the regularization theory presented by Engl at al. in \cite{regularization_of_inverse_problems} for solving discrete inverse problems through neural networks. It is noteworthy that Engl et al.~examined regularization in Hilbert spaces, while our focus is on discrete inverse problems. Prior to introducing neural networks as solvers, we present a more general theory encompassing a broader class of functions, termed {\em reconstructors},  designed for addressing problem \eqref{eq:forward_problem}. Within this framework, we first formalize the two fundamental concepts of $\eta^{-1}$-accuracy and $\epsilon$-stability, and then we present significant findings for a class of functions called {\em stabilizers}. We establish a mathematical relationship that quantifies the trade-off between stability and accuracy, demonstrating that enhancing a solver's stability is impossible without compromising its accuracy.
In this theoretical approach, neural networks have been analyzed as formal mathematical operators, shedding light on their wildly discussed 'black-box' nature/misinterpretation.

Secondly, we propose a new ground truth-free scheme for reconstructors based on neural networks. We refer to this approach as the REgularized Neural Network (ReNN), as the target images used in the training procedure are solutions of \eqref{eq:forward_problem} computed through a regularization method.
%Secondly, we analyze the accuracy and stability properties of reconstructors based on neural networks. Notably, we propose a ground truth-free scheme, wherein the target images are solutions of \eqref{eq:forward_problem} computed through a regularization method. We refer to this approach as the REgularized Neural Network (ReNN). 
Beyond being more stable than commonly used neural networks as reconstructors, it is applicable in scenarios where collecting a set of ground-truth solutions is challenging or impossible, such as in medical imaging.

Finally,  %we introduce a stabilization strategy for deep learning-based solvers, embedding a stabilizer in a pre-processing layer of a neural network reconstructor.   This strategy proves effective in managing high noise levels in the data. 
we present a novel stabilization strategy tailored for deep learning-based solvers, which incorporates a stabilizer within a pre-processing operator plugged into a neural network reconstructor. This approach demonstrates substantial efficacy in handling elevated noise levels in data. We have termed this methodology STabilized Neural Network (StNN). Furthermore, when integrated with the ReNN scheme, it evolves into the StReNN framework.\\
 
% The second one implements a network made more robust by means of a pre-processing step, acting on the noisy input data through very few iterations of a regularized iterative algorithm.
% We call this approach  STabilized Neural Network (StNN).
% Finally, a combination of ReNN and StNN  (denoted as StReNN) is also introduced.

\paragraph{Structure of the paper} The paper is structured as follows. In Section \ref{sec:Reconstr}, we introduce theoretical concepts related to reconstructors for solving an inverse problem of the form presented in \eqref{eq:forward_problem}. In Section \ref{sec:stabilizers}, we present stabilizers and elucidate their effectiveness by stating their properties, then Section \ref{sec:nn_as_reconstructors} is dedicated to reconstructors based on neural networks and presents our proposals. Following that, in Section \ref{sec:expsetup}, we describe our experimental setup, whereas Section \ref{sec:results} showcases numerical results pertaining to deblurring and denoising. Finally, Section \ref{sec:concl} comprises conclusions and outlines potential directions for future work.
\section{Reconstructors for the solution of linear inverse problems}\label{sec:Reconstr}

This section establishes the theoretical background of the manuscript, providing essential definitions and preliminary results.
To improve the readability of the work, however, we start by introducing the notation we will use throughout the paper. 
We always consider  $\x^{gt}$ to lie in a subset $\X$ of $\R^n$, the set of admissible data. We denote as $\mathcal{Y} = Rg(\A, \X)$ the range of $\A$ over $\X$, where $\A$ is a continuous linear operator. We assume $\Y$ to be dense-in-itself (i.e. with no isolated point) so that, for any admissible $\x^{gt} \in \X$ and any neighborhood $V$ of $\y = \A \x^{gt}$, there is at least an $\x' \in \X$, $\x' \neq \x^{gt}$, such that $ \y' = \A \x' \in V$. When $\x \in \X \subseteq \R^n$ or $\y \in \Y \subseteq \R^m$, then $||\x||$ and $|| \y ||$ will be Euclidian norms. For any $\epsilon > 0$, we also define $\Y^\epsilon = \{ \y + \e; \> \y \in \Y, ||\e||\leq \epsilon \}$. 
With the following definitions, we can formalize the concept of reconstructor to solve problem \eqref{eq:disc_fred_eq} accurately.

\begin{definition}
    Any continuous function $\Psi: \R^{m} \to \R^{n}$, mapping $\y$ %to a potential reconstruction 
    to $\x=\Psi(\y)$, is called a reconstructor. 
\end{definition}

% Given a  reconstructor $\Psi$, we define its accuracy.

\begin{definition}
    A reconstructor $\Psi: \R^{m} \to \R^{n}$ is said to be $\eta^{-1}$-accurate, with $\eta>0$, if:
    \begin{align*}
        \eta = \sup_{\x^{gt} \in \X} || \Psi(\A\x^{gt}) - \x^{gt} ||.
    \end{align*}
    We  define the set 
    % \begin{align*}
     $ \  \mathcal{R}_\eta = \{ \Psi: \R^m \to \R^n; \Psi \text{ is a reconstructor with accuracy } \eta^{-1} \}. $
    % \end{align*}
\end{definition}

We observe that without any other restriction, $\eta$ could be infinite. To avoid any issue, we will always consider reconstructors with finite $\eta$ in the following.

\begin{example}\label{ex:pseudoinverse_reconstructor}
An $\infty-$accurate reconstructor of problem \eqref{eq:disc_fred_eq} is given by: 
\begin{equation*}
  \Psi^\dagger(\y) = \A^\dagger \y = (\A^* \A)^{-1} \A^* \y,
\end{equation*}
where  $\A^\dagger$ is the pseudo-inverse matrix, $\A^*$ is the transpose of $\A$, and the last equality holds since $\A$ is assumed to be full-rank. In this case $\Psi^\dagger: \R^m \to \R^n$ is $\infty$-accurate, as: %. Indeed,
$$
|| \Psi^\dagger(\A\x^{gt}) - \x^{gt} || = || (\A^* \A)^{-1} \A^* (\A \x^{gt}) - \x^{gt}|| = || (\A^* \A)^{-1} (\A^* \A) \x^{gt} - \x^{gt} || = 0.
$$
\end{example}

However, reconstructors are rarely applied to noise-free data, hence a focus on the robustness of reconstructors with respect to noise is necessary.

\begin{definition}\label{eq:definition_of_stability_constant}
    Let $\epsilon>0$ and $\Psi$ be an $\eta^{-1}$-accurate reconstructor applied to problem \eqref{eq:forward_problem}. We define the $\epsilon$-stability constant $C_\Psi^\epsilon$  of $\Psi$ as:
    \begin{equation*}
    C^\epsilon_\Psi = \sup_{\substack{\x^{gt} \in \X \\ || \e || \leq \epsilon}} \frac{|| \Psi(\A\x^{gt} + \e) - \x^{gt} || - \eta}{|| \e ||}.
    \end{equation*}
\end{definition}

We will consider in the following the realistic case of $C_\Psi^\epsilon < \infty$.

\begin{definition}
    The reconstructor  $\Psi$ is said to be $\epsilon$-stable for a given $\epsilon>0$ if $C_\Psi^\epsilon \in [0, 1)$. Otherwise, $\Psi$ is said to be $\epsilon$-unstable.
\end{definition}

An $\epsilon$-stable reconstructor $\Psi$ does not amplify corruptions having norm less than $\epsilon$ (as graphically represented in Figure \ref{fig:stable_unstable}), since \eqref{eq:definition_of_stability_constant} implies:
$$
    || \Psi(\A\x^{gt} + \e) - \x^{gt} || \leq \eta + C^\epsilon_\Psi ||\e|| \qquad \forall \> \x^{gt} \in \X, \forall \> \e \in \R^m, ||\e||\leq \epsilon.
$$ 

\begin{definition}
   We define the {\it stability radius} $\rho$ of $\Psi$ as:
    \begin{equation*}
        \rho = \sup \{ \epsilon>0;\> C^\epsilon_\Psi \in [0, 1) \}\,.
    \end{equation*}
\end{definition}

\begin{figure}
    \centering
    \includegraphics[width=0.7\linewidth]{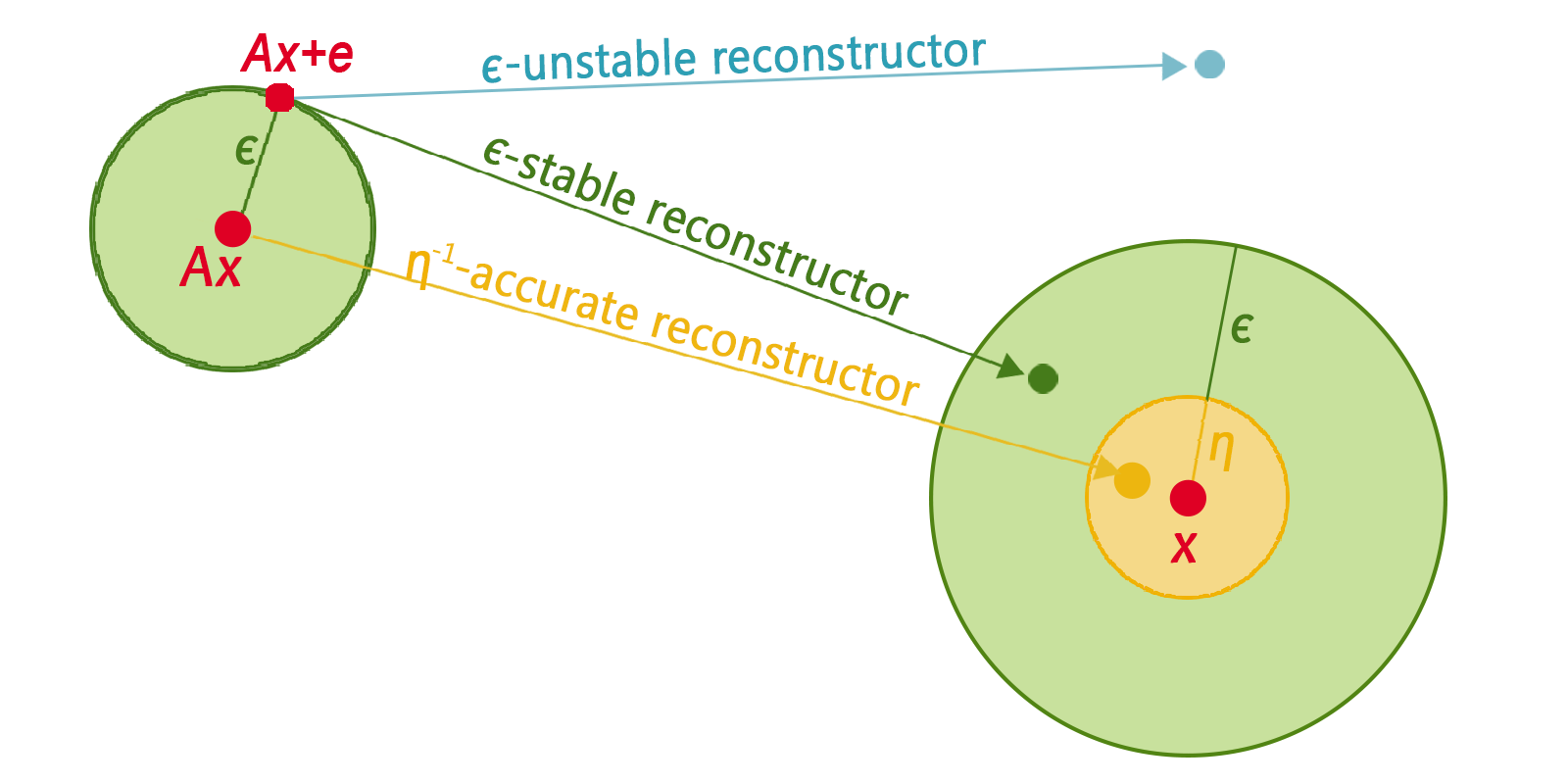}
    \caption{Graphical representation of the $\epsilon$-stability and $\epsilon$-instability  for an $\eta^{-1}$-accurate reconstructor. }
    \label{fig:stable_unstable}
\end{figure}

\begin{example}\label{example:constant_reconstructor}
A reconstructor with an infinite stability radius is the following. Given $\epsilon>0$, if $\mu$ is a probability distribution over $\X$ (for example, $\mu$ is the normalized Lebesgue distribution over $\X$), the reconstructor defined as:
$$
\Psi^{\X, \epsilon}(\y^\delta) = \int_\X \x \mu(d\x), \qquad \forall \y^\delta \in \Y^\epsilon
$$
is $\epsilon$-stable independently from the value of $\epsilon >0$. Indeed:
$$
|| \Psi^{\X, \epsilon}(\A\x^{gt} + \e) - \x^{gt} || = \Bigl\|  \int_\X \x \mu(d\x) - \x^{gt} \Bigr\| \leq \rho(\X),
$$
where $\rho(\X)$ is the radius of $\X$, defined as $\rho(\X) = \inf \{r > 0: \X \subseteq \mathcal{B}(\int_\X \x \mu(d\x); r)\}$. As a consequence the stability constant is infinite regardless $\epsilon$, and $\Psi^{\X(\y), \epsilon}$ has accuracy $ \rho(\X)^{-1}$.
\end{example}

\begin{example}
The pseudo-inverse reconstructor $\Psi^\dagger(\y)$ in  \eqref{ex:pseudoinverse_reconstructor} is unstable for any $\epsilon>0$ when $\A$ is ill-conditioned. Indeed: 
\begin{align*}
|| \Psi^\dagger(\A\x^{gt} + \e) - \x^{gt} || = || (\A^* \A)^{-1} (\A^* \A) \x^{gt} + (\A^* \A)^{-1} \A^* \e - \x^{gt} || \\ 
= || (\A^* \A)^{-1} \A^* \e ||.
\end{align*}
If $\A = \boldsymbol{U} \boldsymbol{\Sigma} \boldsymbol{V}^*$ is the Singular Value Decomposition (SVD) of $\A$, then:
$$
    (\A^* \A)^{-1} \A^* \e = (\boldsymbol{V} \boldsymbol{\Sigma}^2 \boldsymbol{V}^*)^{-1} \boldsymbol{V} \boldsymbol{\Sigma} \boldsymbol{U}^* \e = \boldsymbol{V} \boldsymbol{\Sigma}^\dagger \boldsymbol{U}^* \e = \sum_{i=1}^{n} \frac{\boldsymbol{u}_i^T \e}{\sigma_i} \boldsymbol{v}_i,
$$
which implies that $||(\A^* \A)^{-1} \A^* \e|| \gg ||\e||$ when $\A$ has singular values close to zero.
\end{example}

These examples shed light on a possible conflict between accuracy and stability for a given reconstructor $\Psi$. 
In the next paragraphs, we study this relationship.

\subsection{Accuracy vs. stability trade-off}
We can derive a relation between accuracy and stability, which becomes particularly interesting when $\A$ is ill-conditioned.

\begin{lemma}\label{lemma:stab_vs_acc_tradeoff}
    Let $\Psi: \R^m \to \R^n$ be an $\eta^{-1}$-accurate reconstructor. Then, for any $\x^{gt} \in \X$ and for any $\epsilon > 0$, $\exists \: \tilde{\e} \in \R^m$ with $|| \tilde{\e} || \leq \epsilon$ such that:
    \begin{equation}\label{eq:bad_corruption}
        || \Psi(\A\x^{gt} + \tilde{\e}) - \x^{gt} || \geq ||\A^\dagger \tilde{\e}|| - \eta\,.
    \end{equation}
\end{lemma}

\begin{proof}
    Since $\A\x^{gt} \in \Y$ for any $\x^{gt} \in \X$, and since $\Y$ has no isolated points, then for any $\epsilon > 0$ there is an $\tilde{\e} \in \R^m$ with $|| \tilde{\e} || \leq \epsilon$ such that $\A\x^{gt} + \tilde{\e} \in \Y$. Thus, $\exists \: \x' \in \X$ such that $\A\x^{gt} + \tilde{\e} = \A\x'$. Consequently:
    %\begin{align*}
    \begin{eqnarray*}
        || \Psi(\A\x^{gt} + \tilde{\e}) - \x^{gt} || = || \Psi(\A\x') - \x^{gt} || & \geq &  || \x' - \x^{gt} || - || \Psi(\A\x') - \x' || \\
        & \geq & || \x' - \x^{gt}|| - \eta\,.
    \end{eqnarray*}
    %\end{align*}
    Since  $\A\x^{gt} + \tilde{\e} = \A\x'$ by construction, it holds that $\tilde{\e} = \A(\x^{gt} - \x')$, which implies that $\x^{gt} - \x' = \A^\dagger \tilde{\e}$. To conclude:
    \begin{align*}
        || \Psi(\A\x^{gt} + \tilde{\e}) - \x^{gt} || \geq || \x' - \x^{gt}|| - \eta = ||\A^\dagger \tilde{\e}|| - \eta\,.
    \end{align*}
\end{proof}

Since the corruption $\tilde{\e}$ such that the relationship \eqref{eq:bad_corruption} holds for some $\epsilon>0$ depends on $\x^{gt}$, for any $\x^{gt} \in \X$, we will consider the set:
\begin{equation}
    E(\x^{gt}) = \{ \e \in \R^m; \: \text{Equation \eqref{eq:bad_corruption} holds for }\e,\text{ for some } \epsilon>0 \}.
\end{equation}

\begin{theorem}[Trade-off Theorem]\label{thm:stab_vs_acc_tradeoff}
    Under the assumptions of Lemma \ref{lemma:stab_vs_acc_tradeoff} it holds that, for any $\x^{gt} \in \X$ and for any $\tilde{\e} \in E(\x^{gt})$ with $|| \tilde{\e} || \leq \epsilon$,
    \begin{equation}\label{eq:stab_vs_acc_tradeoff}
        C^\epsilon_\Psi \geq \frac{||\A^\dagger \tilde{\e}|| - 2\eta}{|| \tilde{\e} ||}.
    \end{equation}
\end{theorem}

\begin{proof}
    For any $\x^{gt} \in \X$,
    \begin{align*}
        C^\epsilon_\Psi = \sup_{\substack{\x \in \X \\|| \e || \leq \epsilon}} \frac{|| \Psi(\A\x + \e) - \x || - \eta}{|| \e ||} \geq \sup_{ || \e || \leq \epsilon} \frac{|| \Psi(\A\x^{gt} + \e) - \x^{gt} || - \eta}{|| \e ||}.
    \end{align*}
    If $\tilde{\e} \in E(\x^{gt})$, $||\tilde{\e}|| \leq \epsilon$, is a perturbation defined on Lemma \ref{lemma:stab_vs_acc_tradeoff}, we have:
    \begin{align*}
    \begin{split}
        C^\epsilon_\Psi &\geq \sup_{|| \e || \leq \epsilon} \frac{|| \Psi(\A\x^{gt} + \e) - \x^{gt} || - \eta}{|| \e ||} \\ &\geq \frac{|| \Psi(\A\x^{gt} + \tilde{\e}) - \x^{gt} || - \eta}{|| \tilde{\e} ||} \\ &\geq \frac{||\A^\dagger \tilde{\e}|| - 2\eta}{|| \tilde{\e} ||},
    \end{split}
    \end{align*}
    which concludes the proof.
\end{proof}

\begin{corollary}\label{cor:bound_on_stab_radius}
Given the assumptions of Theorem \ref{thm:stab_vs_acc_tradeoff}, if $\X = \R^n$, there is a constant $C(\A) > 0$ which depends only on $\A$, such that:
\begin{equation*} %\label{eq:radius_vs_eta}
\rho \leq \frac{2}{\eta^{-1}C(\A)}.
\end{equation*}
\end{corollary}
\begin{proof}
    Consider a reconstructor $\Psi$. By Theorem \ref{thm:stab_vs_acc_tradeoff}, for any $\epsilon > 0$, any $\x^{gt} \in \X$, and any $\tilde{\e} \in E(\x^{gt})$ with $|| \tilde{\e} || \leq \epsilon$, 
    \begin{align}
        C^\epsilon_\Psi \geq \frac{||\A^\dagger \tilde{\e}|| - 2\eta}{|| \tilde{\e} ||}.
    \end{align}
    
    We first observe that, if $\X = \R^n$, then $E(\x^{gt}) = \Y := Rg(\A, \X)$ for any $\x^{gt} \in \X$. Indeed, $\tilde{\e} \in E(\x^{gt})$ if and only if there exists $\x' \in \X$ such that $\tilde{\e} = \A(\x^{gt} - \x')$. Since $\X = \R^n$ is closed under addition, then $\x^{gt} - \x' \in \X$, which implies that $\tilde{\e} \in \Y$, thus $E(\x^{gt}) \subseteq \Y$. Conversely, if $\y \in \Y$, then by definition there exists $\x \in \X$ such that $\y = \A \x$. By defining $\x' = \x^{gt} - \x$, then $\y = \A (\x^{gt} - \x')$, which implies that $\y \in E(\x^{gt})$ and consequently $E(\x^{gt}) = \Y$. 

    Now, let $\A = \boldsymbol{U} \boldsymbol{\Sigma} \boldsymbol{V}^*$ be the SVD of $\A$ and define $\tilde{\e} = \A \left( \frac{\epsilon}{\sigma_n} \bv_n \right)$, where $\sigma_n$ and $\bv_n$ are the smallest singular value of $\A$ and its associated right-singular vector, respectively. Note that $\tilde{\e} \in \Y = E(\x^{gt})$ by definition. Moreover:
    \begin{align*}
        \tilde{\e} & = \A \left( \frac{\epsilon}{\sigma_n} \bv_n \right) = \boldsymbol{U} \boldsymbol{\Sigma} \boldsymbol{V}^* \left( \frac{\epsilon}{\sigma_n} \bv_n \right)  = \frac{\epsilon}{\sigma_n} \sum_{i=1}^n \sigma_i \bu_i \left( \bv_i^T \bv_n \right) \\ &= \frac{\epsilon}{\sigma_n} \sigma_n \bu_n = \epsilon \bu_n,
    \end{align*}
    from which $|| \tilde{\e} || = \epsilon || \bu_n || = \epsilon \leq \epsilon$. 
    Consequently, \eqref{eq:stab_vs_acc_tradeoff} holds for $\tilde{\e}$. Additionally:
    \begin{align*}
        \A^\dagger \tilde{\e} = \A^\dagger \A \left( \frac{\epsilon}{\sigma_n} \bv_n \right) = \frac{\epsilon}{\sigma_n} \bv_n,
    \end{align*}
    hence $|| \A^\dagger \tilde{\e} || = \frac{\epsilon}{\sigma_n}$. Given that, \eqref{eq:stab_vs_acc_tradeoff} reads:
    \begin{align*}
        C^\epsilon_\Psi \geq \frac{||\A^\dagger \tilde{\e}|| - 2\eta}{|| \tilde{\e} ||} =  \frac{\frac{\epsilon}{\sigma_n} - 2\eta}{\epsilon}.
    \end{align*}
    As a consequence of the above relationship, if 
    %\begin{align*}
    $
        \frac{\frac{\epsilon}{\sigma_n} - 2\eta}{\epsilon} > 1,
    $ %\end{align*}
    then $C_\Psi^\epsilon > 1$, i.e. $\rho \leq \epsilon$. A simple computation shows that this holds if:
    \begin{align*}
        % \epsilon < \frac{2}{\eta^{-1} \underbrace{\left(\frac{1-\sigma_n}{\sigma_n}\right)}_{C(\A)}} = \frac{2}{\eta^{-1}C(\A)},
        \epsilon < \frac{2}{\eta^{-1} \left(\frac{1-\sigma_n}{\sigma_n}\right)} = \frac{2}{\eta^{-1}C(\A)},
    \end{align*}
    concluding the proof by calling $C(\A) = \frac{1-\sigma_n}{\sigma_n}$.
\end{proof}

The relation in Corollary \ref{cor:bound_on_stab_radius} between the stability radius $\rho$ and the accuracy $\eta^{-1}$ suggests that there exists a trade-off between accuracy and stability, showing that a very accurate reconstructor is unstable for noise corruption larger than $ \frac{2}{\eta^{-1}C(\A)}$. We remark that for ill-conditioned problems $C(\A) = \frac{1 - \sigma_n}{\sigma_n}$ can be very large, making the radius potentially very small.

Similarly, Theorem \ref{thm:stab_vs_acc_tradeoff} shows that a reconstructor $\Psi$ can be $\epsilon$-stable only if its accuracy is bounded.

\begin{corollary}\label{cor:bound_on_acc}
    Given the assumptions of Theorem \ref{thm:stab_vs_acc_tradeoff}, there exists $\bar{\eta}(\A, \epsilon, \X) \in \R \cup \{ + \infty\}$, such that any reconstructor $\Psi$ with accuracy $\eta^{-1} \geq \bar{\eta}(\A, \epsilon, \X)^{-1}$ is $\epsilon$-unstable, i.e. $C_\Psi^\epsilon \geq 1$.\\
    Moreover, if $\X = \R^n$ and $\eta^{-1} \geq \frac{2}{C(\A)\epsilon}$, where $C(\A)=\frac{1 - \sigma_n}{\sigma_n}$, then $\Psi$ is $\epsilon$-unstable.
\end{corollary}

\begin{proof}
    From Theorem \ref{thm:stab_vs_acc_tradeoff}, 
    %for any $\x^{gt} \in \X$, any $\epsilon > 0$ and any $\tilde{\e} \in E(\x^{gt})$ with $|| \tilde{\e} || \leq \epsilon$, it holds:
    % \begin{equation*}
    %     C_\Psi^\epsilon \geq \frac{|| \A^\dagger \tilde{\e} || - 2\eta}{|| \tilde{\e} ||}.
    % \end{equation*}
    %Consequently,
    $\Psi$ is $\epsilon$-unstable for a given $\epsilon>0$ if
    % \begin{equation*}
     $   \frac{|| \A^\dagger \tilde{\e} || - 2\eta}{|| \tilde{\e} ||} \geq 1. $
    % \end{equation*}
    Such condition holds if and only if:
    \begin{equation*}
        \eta \leq \frac{|| \A^\dagger \tilde{\e} || - || \tilde{\e} ||}{2}.
    \end{equation*}
    Thus, if $\exists \: \tilde{\e} \in E(\x^{gt})$ with $|| \tilde{\e} || \leq \epsilon$ such that $\eta \leq \frac{|| \A^\dagger \tilde{\e} || - || \tilde{\e} ||}{2}$, then $\Psi$ is $\epsilon$-unstable. In particular, if we define:
    \begin{equation}
        \bar{\eta}(\A, \epsilon, \X) = \sup_{\substack{\x^{gt} \in \X \\ \tilde{\e} \in E(\x^{gt}) \\ || \tilde{\e} || \leq \epsilon}} \frac{|| \A^\dagger \tilde{\e} || - || \tilde{\e} ||}{2},
    \end{equation}
    we get the result. Note that, in general, $\bar{\eta}(\A, \epsilon, \X)$ could be infinite. \\
    In the assumption of $\X = \R^n$, we proved in Corollary \ref{cor:bound_on_stab_radius} that for any $\epsilon > 0$ and any $\x^{gt} \in \X$, we can always choose $\tilde{\e} \in E(\x^{gt})$ with $|| \tilde{\e} || \leq \epsilon$ such that $|| \A^\dagger \tilde{\e} || - || \tilde{\e} || = \frac{1 - \sigma_n}{\sigma_n}\epsilon = C(\A)\epsilon$. Thus, $\Psi$ is $\epsilon$-unstable if:
    \begin{equation*}
        \eta \leq \frac{C(\A)\epsilon}{2},
    \end{equation*}
    which proves the corollary.
\end{proof}

\subsection{A sufficient condition for stability}
Whenever a reconstructor is (locally) Lipschitz continuous, we can also derive conditions assessing its stability. First of all, we recall the definition of locally Lipschitz continuous reconstructors.

\begin{definition} \label{eq:definition_local_lip_constant}
    Given $\Y \subseteq \R^m$ and $\epsilon>0$, we define the $\epsilon$-Lipschitz (also called local Lipschitz) constant of $\Psi$ over $\Y$ as:
    \begin{align*} %\label{eq:definition_local_lip_constant}
        L^\epsilon(\Psi, \Y) = \sup_{\substack{\y \in \mathcal{Y}, \z \in \R^m \\ ||\z -\y||\leq \epsilon}} \frac{||\Psi(\z) - \Psi(\y) ||}{||\z - \y||}.
    \end{align*}
    If $L^\epsilon(\Psi, \Y) < \infty$ for some $\epsilon > 0$, then $\Psi$ is said to be locally Lipschitz continuous.
\end{definition}

Focusing on our problem \eqref{eq:forward_problem}, we remark we are interested in the cases where $\Y = Rg(\A, \X)$. In this case, $\y \in \Y$ implies that $\exists \: \x^{gt} \in \X$ such that $\y = \A\x^{gt}$ and each $\z \in \R^m$ with $|| \z - \y || \leq \epsilon$ can be characterized by $\z = \A\x^{gt} + \e$ for some $\e \in \R^m$ with $||\e|| \leq \epsilon$.
Thus, the definition of $L^\epsilon(\Psi, \Y)$ can be rewritten as:

\begin{align*}
    L^\epsilon(\Psi, \Y) = \sup_{\substack{\x^{gt} \in \X \\ ||\e|| \leq \epsilon}} \frac{|| \Psi(\A\x^{gt} + \e) - \Psi(\A\x^{gt}) ||}{|| \e ||}.
\end{align*}
The importance of the local Lipschitz constant $L^\epsilon(\Psi, \Y)$ lies in its strong relationship to the stability constant $C^\epsilon_\Psi$ of the reconstructor. Indeed, if $\y = \A\x^{gt} \in \Y$ is corrupted by additional noise $\e$ with $||\e||\leq\epsilon$, then $L^\epsilon(\Psi, \Y)$ represents the maximum possible variation of the reconstruction obtained by $\Psi$ around the corrupted $\y$, as stated by the following proposition. \\

\begin{proposition}\label{thm:lip_implies_stab}
    If $\Psi \in \mathcal{R}_\eta$ has local Lipschitz constant $L^\epsilon(\Psi, \Y)$, then, for any $||\e||\leq \epsilon$, it holds:
    \begin{align*}%\label{eq:lip_inequality_stability}
        || \Psi(\A\x^{gt} + \e) - \x^{gt} || \leq \eta + L^\epsilon(\Psi, \Y) ||\e||\,.
    \end{align*}
\end{proposition}

\begin{proof}
    By the triangle inequality, it follows that:
    \begin{align*}
        || \Psi(\A\x^{gt} + \e) - \x^{gt} || \leq || \Psi(\A\x^{gt} + \e) - \Psi(\A\x^{gt}) || + || \Psi(\A\x^{gt}) - \x^{gt} ||.
    \end{align*}
    Since $||\e||\leq \epsilon$, the definition of local Lipschitz constant implies that:
    \begin{align*}
        || \Psi(\A\x^{gt} + \e) - \Psi(\A\x^{gt}) || \leq L^\epsilon(\Psi, \Y) || \A\x^{gt} + \e - \A\x^{gt} || = L^\epsilon(\Psi, \Y) || \e ||,
    \end{align*}
    whereas the accuracy of $\Psi$ gives:
    \begin{align*}
        || \Psi(\A\x^{gt}) - \x^{gt} || \leq \eta.
    \end{align*}
    Thus, we can conclude:
    \begin{align*}
        || \Psi(\A\x^{gt} + \e) - \x^{gt} || \leq L^\epsilon(\Psi, \Y) ||\e|| + \eta.
    \end{align*}
\end{proof} 

\begin{corollary}\label{cor:stability_constant_lip_constant}
    Under the assumptions of Theorem \ref{thm:lip_implies_stab}, it holds:
    \begin{align*}
        C^\epsilon_\Psi \leq L^\epsilon(\Psi, \mathcal{Y}).
    \end{align*}
\end{corollary} 

\begin{proof}
    From the inequality in Theorem \ref{thm:lip_implies_stab}, we have:
    $$
        || \Psi(\A\x^{gt} + \e) - \x^{gt} || \leq \eta + L^\epsilon(\Psi, \Y) ||\e|| \iff L^\epsilon(\Psi, \Y) \geq \frac{|| \Psi(\A\x^{gt} + \e) - \x^{gt} || - \eta}{|| \e ||}
    $$ 
    for any $\x^{gt} \in \X$ and any $\e \in \R^m$ with $|| \e || \leq \epsilon$. Consequently, $L^\epsilon(\Psi, \mathcal{Y})$ is a majorant of the set:
    $$
        \left\{ \frac{|| \Psi(\A\x^{gt} + \e) - \x^{gt} || - \eta}{|| \e ||}; \x^{gt} \in \X, || \e || \leq \epsilon \right\}.
    $$
    Since $C^\epsilon_\Psi$ is defined as the supremum of this set, by the minimality of the supremum we have $C^\epsilon_\Psi \leq L^\epsilon(\Psi, \mathcal{Y})$.
\end{proof}

We remark that Corollary \ref{cor:stability_constant_lip_constant} proves that $\Psi$ is $\epsilon$-stable if $L^\epsilon(\Psi, \Y) < 1$, yielding a useful sufficient condition to the assessment of stability. 

\begin{example}\label{ex:Tikh}
Under suitable parameter choices, the Tikhonov reconstructor is $\epsilon$-stable for any $\epsilon>0$. 
The Tikhonov reconstructor is built on the Tikhonov method \cite{variational_methods_in_imaging,tikhonov_numerical_methods} and defined as:
\begin{equation}\label{eq:tik_reg_reconstructor}
        \Psi^{\lambda, \boldsymbol{L}}(\y^\delta) = \arg\min_{\x \in \R^{n}} \frac{1}{2} || \A\x - \y^\delta ||^2 + \frac{\lambda}{2} || \boldsymbol{L}\x ||^2,
\end{equation}
where $\lambda>0$ is the regularization parameter and $\boldsymbol{L} \in \R^{d \times n}$ is a matrix such that $\ker (\A) \cap \ker (\boldsymbol{L}) = \{ \boldsymbol{0} \}$. $\boldsymbol{L}$ is usually chosen as the identity or the forward-difference operator. 
We can prove the following proposition regarding Tikhonov stability.
\begin{proposition}\label{prop:tik_is_stable}
    Let $\epsilon>0$ and $\boldsymbol{L} \in \R^{d \times n}$. Then $\exists \> \lambda > 0$ such that:
    \begin{align*}
        L^\epsilon(\Psi^{\lambda, \boldsymbol{L}}, \Y) < 1.
        % C^\epsilon_{\Psi^{\lambda, \boldsymbol{L}}} \leq L^\epsilon(\Psi^{\lambda, \boldsymbol{L}}, \Y) < 1.
    \end{align*}
\end{proposition}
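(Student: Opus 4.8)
The plan is to bound the $\epsilon$-Lipschitz constant of the Tikhonov reconstructor directly, since by the Corollary following Proposition~\ref{thm:lip_implies_stab} we have $C^\epsilon_{\Psi^{\lambda,L}} \leq L^\epsilon(\Psi^{\lambda,L},\Y)$, so it suffices to exhibit a $\lambda>0$ making $L^\epsilon(\Psi^{\lambda,L},\Y) < 1$. Because $\Psi^{\lambda,L}$ is an affine (in fact linear) map, $\Psi^{\lambda,L}(y^\delta) = M_\lambda y^\delta$ with $M_\lambda = (A^TA + \lambda L^TL)^{-1}A^T$ from \eqref{eq:tik_formulation}, its local Lipschitz constant is independent of $\epsilon$ and equals the spectral norm $\|M_\lambda\|_2$. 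So the whole statement reduces to: $\|(A^TA + \lambda L^TL)^{-1}A^T\|_2 \to 0$ as $\lambda \to \infty$, hence is eventually $<1$.

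First I would make the reduction explicit: for any $y \in \Y$ and $z \in \R^m$,
\[
\|\Psi^{\lambda,L}(z) - \Psi^{\lambda,L}(y)\| = \|M_\lambda(z-y)\| \leq \|M_\lambda\|_2\,\|z-y\|,
\]
so $L^\epsilon(\Psi^{\lambda,L},\Y) \leq \|M_\lambda\|_2$ for every $\epsilon>0$. Next I would estimate $\|M_\lambda\|_2$. The clean way is to bound $\|M_\lambda v\|$ for an arbitrary unit vector $v \in \R^m$: set $x = M_\lambda v$, so $(A^TA + \lambda L^TL)x = A^Tv$. Taking the inner product with $x$ gives $\|Ax\|^2 + \lambda\|Lx\|^2 = \langle A^Tv, x\rangle = \langle v, Ax\rangle \leq \|Ax\|$, whence $\|Ax\| \leq 1$ and $\|Lx\|^2 \leq 1/\lambda$. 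Under the hypothesis $\ker A \cap \ker L = \{0\}$, the map $x \mapsto (\|Ax\|^2 + \|Lx\|^2)^{1/2}$ is a norm on $\R^n$, so by equivalence of norms there is a constant $c>0$ with $\|x\|^2 \leq c(\|Ax\|^2 + \|Lx\|^2)$; combining, $\|x\|^2 \leq c(1 + 1/\lambda)$. Thus $\|M_\lambda\|_2 \leq \sqrt{c(1+1/\lambda)}$ — which is bounded but not yet going to zero, so I would need a sharper argument: split $\R^n$ using the generalized SVD of the pencil $(A,L)$, or simply note that $A^Tv$ lies in $\mathrm{Rg}(A^T) = (\ker A)^\perp$, and on that subspace $A^TA$ is positive definite while $\lambda L^TL \succeq 0$, so $\|M_\lambda v\| \le \|(A^TA|_{(\ker A)^\perp})^{-1}A^Tv\|$ is already finite, and a direct GSVD diagonalization shows each singular value of $M_\lambda$ has the form $\sigma_i/(\sigma_i^2 + \lambda \mu_i^2)$ with not both $\sigma_i,\mu_i$ zero, which tends to $0$ as $\lambda\to\infty$. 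Hence $\|M_\lambda\|_2 \to 0$, and picking $\lambda$ large enough gives $\|M_\lambda\|_2 < 1$, so $C^\epsilon_{\Psi^{\lambda,L}} \le L^\epsilon(\Psi^{\lambda,L},\Y) \le \|M_\lambda\|_2 < 1$.

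The main obstacle is the $\ker A \cap \ker L = \{0\}$ bookkeeping: when $A$ itself is rank-deficient (allowed here, since the proposition does not assume $A$ full rank), one cannot just invert $A^TA$, and must instead use the generalized SVD of $(A,L)$ to see that every ``mode'' is either damped by $\sigma_i^2$ or by $\lambda\mu_i^2$, so that $M_\lambda$ genuinely has vanishing norm as $\lambda\to\infty$ rather than merely a finite bound. Once that diagonalization is in hand the rest is routine: the singular values of $M_\lambda$ are $\sigma_i/(\sigma_i^2+\lambda\mu_i^2)$, each continuous and decreasing to $0$ in $\lambda$, so their maximum drops below $1$ for all sufficiently large $\lambda$, completing the proof.
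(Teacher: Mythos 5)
Your proof follows essentially the same route as the paper's: both arguments exploit the fact that $\Psi^{\lambda,L}$ is the linear map $M_\lambda=(A^TA+\lambda L^TL)^{-1}A^T$ and reduce the claim to showing that its operator norm tends to $0$ as $\lambda\to\infty$ (the paper phrases this as pointwise convergence $\Psi^{\lambda,L}(y^\delta)\to 0$ after factoring out $1/\lambda$, and then passes to the supremum defining $L^\epsilon(\Psi^{\lambda,L},\Y)$). Your reduction via $\|M_\lambda\|_2$ is cleaner, and the energy estimate $\|Ax\|^2+\lambda\|Lx\|^2\le\|Ax\|$ is a nice intermediate step the paper does not have.

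However, your decisive step --- that every generalized singular value $\sigma_i/(\sigma_i^2+\lambda\mu_i^2)$ of $M_\lambda$ tends to zero --- is not correct as stated. When $\mu_i=0$ and $\sigma_i\neq 0$ (a direction in $\ker L\setminus\ker A$), the quantity equals $1/\sigma_i$ for every $\lambda$, so these modes are never damped and $\|M_\lambda\|_2$ need not go to $0$. Concretely, if $L=0$ (which satisfies $\ker A\cap\ker L=\{0\}$ whenever $A$ has full column rank), then $M_\lambda=A^\dagger$ for all $\lambda$ and $L^\epsilon(\Psi^{\lambda,L},\Y)=1/\sigma_n\gg 1$ for ill-conditioned $A$: no choice of $\lambda$ helps. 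The conclusion therefore requires $\ker L=\{0\}$ (or at least that $A$ restricted to $\ker L$ has all singular values bounded away from the relevant threshold), a hypothesis you should state explicitly. To be fair, the paper's own proof hides exactly the same assumption: its assertion that $\frac{1}{\lambda}\bigl(\frac{1}{\lambda}A^TA+L^TL\bigr)^{-1}A^Ty^\delta\to 0$ requires $\bigl(\frac{1}{\lambda}A^TA+L^TL\bigr)^{-1}$ to remain bounded as $\lambda\to\infty$, which again forces $L^TL$ to be invertible. So your argument is no weaker than the published one, but as written both leave this gap.
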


\begin{proof}
    For any $\lambda > 0$ and any $\y^\delta \in \Y^\epsilon$, it can be shown, by considering the normal equations of \eqref{eq:tik_reg_reconstructor}, that:
    \begin{align*}
        \Psi^{\lambda, \boldsymbol{L}}(\y^\delta) = \Bigl(\A^* \A + \lambda \boldsymbol{L}^* \boldsymbol{L}\Bigr)^{-1} \A^* \y^\delta = \frac{1}{\lambda} \Bigl(\frac{1}{\lambda}\A^* \A + \boldsymbol{L}^* \boldsymbol{L}\Bigr)^{-1} \A^* \y^\delta\,.
    \end{align*}
    Consequently, for any $\y^\delta \in \Y^\epsilon$, it holds that
    $ %\begin{align*}
        \Psi^{\lambda, \boldsymbol{L}}(\y^\delta) \to 0$ for  %\quad \text{for} \quad 
        $\lambda \to \infty.
    $ %\end{align*}
    Then:
    \begin{align*}
        L^\epsilon(\Psi^{\lambda, \boldsymbol{L}}, \Y) = \sup_{\substack{\x^{gt} \in \X \\ ||\e|| \leq \epsilon}} \frac{|| \Psi^{\lambda, \boldsymbol{L}}(\A\x^{gt} + \e) - \Psi^{\lambda, \boldsymbol{L}}(\A\x^{gt}) ||}{|| \e ||} \to 0 \quad \text{for} \quad \lambda \to \infty,
    \end{align*}
    which implies that, for all $\alpha > 0$, there exists $\bar{\lambda} > 0$ such that for any $\lambda > \bar{\lambda}$, $L^\epsilon(\Psi^{\lambda, \boldsymbol{L}}, \Y) < \alpha$. Choosing $\alpha = 1$ we obtain the required result.
\end{proof}

Corollary \ref{cor:stability_constant_lip_constant} and Proposition \ref{prop:tik_is_stable} demonstrate that it is always possible to build a stable Tikhonov reconstructor. Such property will play a crucial role in Subsection \ref{ssec:ReNN} where we will explain our proposed ReNN approach. 
\end{example}

%\textcolor{red}{TOGLIAMO PROPRIO tutto il pezzo a seguire, sia da qua che dalla appendices? }
% In addition, we prove in Appendix \eqref{appendix:estimation_of_local_lip} an important relation between the singular values of $A$, $\{ \sigma_1, \dots, \sigma_n \}$ where $\sigma_1 \geq \sigma_2 \geq \dots \geq \sigma_n$, and the local Liptshitz constant of an $\eta^{-1}$-accurate reconstructor $\Psi$:
% \begin{align}\label{eq:lip_and_ill_conditioning}
%     L^\epsilon(\Psi, \Y) \geq \frac{1}{\epsilon} \Bigl( \frac{\eta - 2\sigma_t \eta}{\sigma_t} \Bigr)
% \end{align}
% for any $t < rank(A)$.

% On the other hand, as a consequence of \eqref{eq:lip_and_ill_conditioning}, a necessary condition for $L^\epsilon(\Psi, \Y) < 1$ is that for any $t < rank(A)$,

% \begin{align}
%     \frac{1}{\epsilon} \Bigl( \frac{\eta - 2\sigma_t \eta}{\sigma_t} \Bigr) < 1 \iff \eta - 2 \sigma_t \eta < \epsilon \sigma_t \iff \sigma_t > \frac{\eta}{2\eta + \epsilon}.
% \end{align} 
% %
% This necessary condition does not hold whenever the singular values of $A$ decay below the $\frac{\eta}{2\eta + \eta}$ threshold.

\section{Stabilizers in the solution of linear inverse problems}\label{sec:stabilizers}

In this section, we delve into additional properties pertinent to stable reconstructors, by introducing the novel concept of {\em stabilizer} which will be exploited in Subsection \ref{ssec:StNN} to define our StNN and StReNN approaches.

\subsection{Stabilizers and properties}\label{subsec:stabilizers}
% \textcolor{orange}{\textbf{Stabilizers.}Toglierei le SubSection. }

% From the Universal Approximation Theorem \cite{approximation1,approximation2} we infer that $\Delta_\epsilon(\Theta)$ strongly influences the assessment of the $\Psi_\Theta^{\lambda, \boldsymbol{L}}$ stability which is not guaranteed in case of large values of $\Delta_\epsilon(\Theta)$. At the same time we observe that for any $\Delta_\epsilon(\Theta) > 0$ there always exists a neural network $\Psi_\Theta \in \mathcal{F}_\Theta^{\mathcal{A}}$ (with suitable $\mathcal{A}$), such that $|| \Psi^{\lambda, \boldsymbol{L}}_\Theta - \Psi^{\lambda, \boldsymbol{L}} ||_{L^\infty(\Y^\epsilon)} \leq \Delta_\epsilon(\Theta)$. However, obtaining such a neural network can be infeasible in practice, because of the theory-practice gap of numerical computations, as observed in \cite{gap_between_theory_and_practice,can_stable_network_be_computed}. It becomes therefore necessary to design a new reconstructor, directly facing the stability issue more than the approximation task.
% To this aim, we now introduce a stabilization strategy. 

\begin{definition}\label{def:stabilizers}
    
    A continuous functions $\phi: \R^{m} \to \R^{t}$ is an $\epsilon$-stabilizer of a   reconstructor $\Psi: \R^{m} \to \R^{n}$ if:
    \begin{enumerate}
        \item $\forall \: \e \in \R^{m}$ with $||\e|| \leq \epsilon$, $\exists\ C^\epsilon_\phi \in [0, 1)$ and  $\exists\ \e' \in \R^{n}$ with $||\e'|| = C^\epsilon_\phi ||\e||$ such that:
        \begin{equation*}\label{eq:stabilizer_constant}
            \phi(\A\x + \e) = \phi(\A\x) + \e'.
        \end{equation*}
        
        \item $\exists\ \gamma: \R^{t} \to \R^{n}$ such that $\Psi = \gamma \circ \phi$. 
    \end{enumerate}
    The smallest constant $C^\epsilon_\phi$ for which the definition holds is defined as the stability constant of the stabilizer $\phi$. \\
    We  also define the set:
    \begin{align*}
        \mathcal{S}_\eta^\epsilon = \{ \Psi \in \mathcal{R}_\eta; \: \exists \gamma: \R^t \to \R^n, \exists \: \phi \: \epsilon\text{-stabilizer,} \text{ s.t. } \Psi = \gamma \circ \phi \}.
    \end{align*}
    Whenever $t=m$ and $\gamma: \R^{m} \to \R^{n}$ is a reconstructor, the reconstructor $\Psi$ is said to be $\epsilon$-stabilized with respect to $\gamma$. 
\end{definition}

Note that, in the definition of $\epsilon$-stabilizer, we only require a stability condition for $\phi$ in the first item. % as in \eqref{eq:stabilizer_constant}.
% We also remark that by \eqref{def:stabilizers}, we disentangle the problem of finding an accurate and stable reconstructor. 
Interestingly, given a reconstructor $\Psi = \gamma \circ \phi$, we can estimate its $\epsilon$-stability constant $C_\Psi^\epsilon$  by means of the constant $C_\phi^\epsilon$  and the local Lipschitz constant of $\gamma$, as proved in the following proposition.

\begin{proposition}\label{thm:stability_constant_stabilized_reconstructor}
    Let $\Psi: \R^m \to \R^n$, $\Psi = \gamma \circ \phi$, with $\phi$ being an $\epsilon$-stabilizer. If $C_\phi^\epsilon$ is the constant mentioned in Definition \ref{def:stabilizers}, $L^\epsilon(\gamma, \mathcal{T})$ is the local Lipschitz constant of $\gamma$ with $\mathcal{T} = \phi(\Y)$, it holds:
    \begin{equation*}
        C^\epsilon_\Psi \leq L^\epsilon(\gamma, \mathcal{T}) C_\phi^\epsilon.
    \end{equation*}
\end{proposition}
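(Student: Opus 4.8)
The plan is to mimic exactly the structure used in Proposition~\ref{thm:lip_implies_stab} and its Corollary: fix an arbitrary $x^{gt}\in\X$ and an arbitrary perturbation $e\in\R^m$ with $\|e\|\le\epsilon$, bound the ratio appearing in the definition \eqref{eq:definition_of_stability_constant} of $C^\epsilon_\Psi$ by the constant $L^\epsilon(\gamma,\mathcal{T})\,C^\epsilon_\phi$, and then pass to the supremum. First I would apply the triangle inequality,
$$
\| \Psi(Ax^{gt}+e) - x^{gt} \| \le \| \Psi(Ax^{gt}+e) - \Psi(Ax^{gt}) \| + \| \Psi(Ax^{gt}) - x^{gt} \|,
$$
and use that $\Psi$ is $\eta^{-1}$-accurate (this accuracy is implicit, since $C^\epsilon_\Psi$ is only defined for an $\eta^{-1}$-accurate reconstructor) to get $\| \Psi(Ax^{gt}) - x^{gt} \| \le \eta$. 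Rearranging,
$$
\frac{\| \Psi(Ax^{gt}+e) - x^{gt} \| - \eta}{\|e\|} \le \frac{\| \Psi(Ax^{gt}+e) - \Psi(Ax^{gt}) \|}{\|e\|},
$$
so it suffices to bound the right-hand side by $L^\epsilon(\gamma,\mathcal{T})\,C^\epsilon_\phi$.

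Next I would exploit the stabilizer decomposition $\Psi=\gamma\circ\phi$. Writing $y = Ax^{gt}\in\Y$, property~1 of Definition~\ref{def:stabilizers} gives an $e'\in\R^t$ with $\|e'\| = C^\epsilon_\phi\|e\|$ and $\phi(Ax^{gt}+e) = \phi(Ax^{gt}) + e'$. Hence
$$
\| \Psi(Ax^{gt}+e) - \Psi(Ax^{gt}) \| = \| \gamma(\phi(Ax^{gt}) + e') - \gamma(\phi(Ax^{gt})) \|.
$$
Now $\phi(Ax^{gt}) = \phi(y) \in \phi(\Y) = \mathcal{T}$, and since $C^\epsilon_\phi\in[0,1)$ we have $\|e'\| = C^\epsilon_\phi\|e\| \le C^\epsilon_\phi\epsilon < \epsilon$, so the pair $\big(\phi(Ax^{gt}),\,\phi(Ax^{gt})+e'\big)$ is admissible in the supremum defining $L^\epsilon(\gamma,\mathcal{T})$ in \eqref{eq:definition_local_lip_constant}. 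Therefore
$$
\| \gamma(\phi(Ax^{gt}) + e') - \gamma(\phi(Ax^{gt})) \| \le L^\epsilon(\gamma,\mathcal{T})\,\|e'\| = L^\epsilon(\gamma,\mathcal{T})\,C^\epsilon_\phi\,\|e\|.
$$

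Combining the two displays yields, for every $x^{gt}\in\X$ and every $e$ with $\|e\|\le\epsilon$,
$$
\frac{\| \Psi(Ax^{gt}+e) - x^{gt} \| - \eta}{\|e\|} \le L^\epsilon(\gamma,\mathcal{T})\,C^\epsilon_\phi,
$$
and taking the supremum over $x^{gt}\in\X$ and $\|e\|\le\epsilon$ gives $C^\epsilon_\Psi \le L^\epsilon(\gamma,\mathcal{T})\,C^\epsilon_\phi$, as claimed. The argument is essentially routine once set up; the only point needing care — and the step I would treat as the main (minor) obstacle — is verifying that the perturbed point $\phi(Ax^{gt})+e'$ stays within distance $\epsilon$ of $\mathcal{T}$ so that the local Lipschitz constant of $\gamma$ is legitimately applicable; this is exactly where the requirement $C^\epsilon_\phi<1$ in Definition~\ref{def:stabilizers} is used. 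One should also note in passing that $\mathcal{T}=\phi(\Y)$ inherits compactness (hence the finiteness discussion for $L^\epsilon(\gamma,\mathcal{T})$ makes sense) from continuity of $\phi$ and compactness of $\Y$.
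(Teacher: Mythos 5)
Your proof is correct and follows essentially the same route as the paper's: apply the stabilizer property to write $\phi(Ax^{gt}+e)=\phi(Ax^{gt})+e'$, split via the triangle inequality into an accuracy term bounded by $\eta$ and a term bounded by $L^\epsilon(\gamma,\mathcal{T})\|e'\|\le L^\epsilon(\gamma,\mathcal{T})C^\epsilon_\phi\|e\|$, then pass to the supremum. Your additional checks (that $\|e'\|<\epsilon$ so the local Lipschitz constant is applicable, and that $\mathcal{T}=\phi(\Y)$ is compact) are points the paper leaves implicit but do not change the argument.
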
 

\begin{proof}
    Let $\x^{gt} \in \X$ and $||\e|| \leq \epsilon$. Then:
    $$ %\begin{equation}
        || \Psi(\A\x^{gt} + \e) - \x^{gt} || = || \gamma(\phi(\A\x^{gt} + \e)) - \x^{gt} ||.
    $$ %\end{equation}
    Since $\phi$ is a stabilizer, $\phi(\A\x^{gt} + \e) = \phi(\A\x^{gt}) + \e'$ with $||\e'|| \leq C_{\phi}^\epsilon ||\e||$. Thus:
    \begin{equation*}%\begin{align} \label{eq:stability_constant_stabilized_reconstructor}
    \begin{split}       
        || (\gamma(\phi(\A\x^{gt} + \e)) - \x^{gt} || &= ||\gamma(\phi(\A\x^{gt}) + \e') - \x^{gt} || \\ &\leq || \gamma(\phi(\A\x^{gt}) + \e') - \gamma(\phi(\A\x^{gt})) || + ||\gamma(\phi(\A\x^{gt})) - \x^{gt}|| \\ &\leq \eta + L^\epsilon(\gamma, \mathcal{T}) ||\e'|| = \eta + L^\epsilon(\gamma, \mathcal{T}) C_\phi^\epsilon ||\e||,
    \end{split}
    \end{equation*} %\end{align}
    which implies that $L^\epsilon(\gamma, \mathcal{T})  C_\phi^\epsilon$ is a majorant of the set:
    $$
        \left\{ \frac{|| \Psi(\A\x^{gt} + \e) - \x^{gt} || - \eta}{|| \e ||}; \: \x^{gt} \in \X, || \e || \leq \epsilon \right\}.
    $$
    Since $C^\epsilon_\Psi$ is defined as the supremum of the same set, by the minimality of the supremum we have $C^\epsilon_\Psi \leq L^\epsilon(\gamma, \mathcal{T}) C_\phi^\epsilon$.
\end{proof}

Theorem \ref{thm:stability_constant_stabilized_reconstructor} implies the following important result.

\begin{theorem}\label{thm:stabilizers_stabilize}
    For any $\epsilon>0$, $\eta_1, \eta_2 > 0$, let $\Psi_1 = \gamma_1 \circ \phi_1 \in \mathcal{S}_{\eta_1}^\epsilon$, and $\Psi_2 \in \mathcal{R}_{\eta_2}$. If:
    \begin{align}\label{eq:stability_constant_stabilizer_inequality}
        C^\epsilon_{\phi_1} \in \left[0, \frac{C^\epsilon_{\Psi_2}}{L^\epsilon(\gamma_1, \mathcal{T})} \right],
    \end{align}
    then:
 $$
     C_{\Psi_1}^\epsilon \leq C_{\Psi_2}^\epsilon.
$$    
\end{theorem}

\begin{proof}
    Since \eqref{eq:stability_constant_stabilizer_inequality} holds by hypothesis 
    % and 
    % \begin{equation*}
    %     C^\epsilon_{\phi_1} \in \left[0, \frac{C^\epsilon_{\Psi_2}}{L^\epsilon(\gamma_1, \mathcal{T})} \right],
    % \end{equation*}
    and $C^\epsilon_{\Psi_1} \leq L^\epsilon(\gamma_1, \mathcal{T}) C^\epsilon_{\phi_1}$ for $\Psi_1 \in \mathcal{S}_{\eta_1}^\epsilon$ by Theorem \ref{thm:stability_constant_stabilized_reconstructor}, we get:
    
    \begin{equation*}
        C^\epsilon_{\Psi_1} \leq L^\epsilon(\gamma_1, \mathcal{T}) C^\epsilon_{\phi_1} \leq L^\epsilon(\gamma_1, \mathcal{T}) \frac{C^\epsilon_{\Psi_2}}{L^\epsilon(\gamma_1, \mathcal{T})} = C^\epsilon_{\Psi_2},  
    \end{equation*}
    which concludes the proof.
\end{proof}

The theorem yields interesting consequences for the special case where $\Psi_1$ and $\Psi_2$ share the same accuracy. 
%\begin{corollary}
For instance, when $\Psi_1 = \gamma_1 \circ \phi_1 \in \mathcal{S}_{\eta}^\epsilon $ and $\Psi_2 \in \mathcal{R}_\eta$, if \eqref{eq:stability_constant_stabilizer_inequality} holds, the theorem suggests that $\Psi_1$ is preferable to $\Psi_2$,  as $\Psi_1$ is more stable than $\Psi_2$.
%\end{corollary}
In addition, we can state the following result, whose  proof is trivial.
\begin{corollary}
    Let $\Psi_1 = \gamma_1 \circ \phi_1 \in \mathcal{S}_{\eta}^\epsilon $ and $\Psi_2 = \gamma_2 \circ \phi_2 \in \mathcal{S}_\eta^\epsilon$. 
If \eqref{eq:stability_constant_stabilizer_inequality} holds, 
then $C^\epsilon_{\phi_1} \leq C^\epsilon_{\phi_2}$. % for \eqref{thm:stabilizers_stabilize}. 
\end{corollary}

% \begin{corollary} If $\gamma_1=\gamma_2$, the stability of the reconstructor is higher if the stability constant of  $\phi$ is smaller.
% \end{corollary}

In the next proposition, we show a result linking the accuracy of a reconstructor $\Psi \in \mathcal{S}_\eta^\epsilon$ to a characterization of its $\epsilon$-stabilizer $\phi$.

\begin{proposition}\label{prop:existence_of_stabilizer}
    Let $\Psi = \gamma \circ \phi \in \mathcal{S}_\eta^\epsilon$. Let:
    \begin{align}\label{eq:definition_approx_injective}
    \sigma(\phi) := \sup \{ || \x_1 - \x_2 ||; \: \x_1, \x_2 \in \X, \phi(\A\x_1) = \phi(\A\x_2) \}\,.
    \end{align}
    Then:
    \begin{equation*}
        \eta^{-1} \leq \frac{2}{\sigma(\phi)}.
    \end{equation*}
\end{proposition}

\begin{proof}
    Let $\x_1, \x_2 \in \X$ such that $\phi(\A\x_1) = \phi(\A\x_2)$. Then:
    \begin{align*}
        || \x_1 - \x_2 || &\leq || \phi(\A\x_1) - \x_1 || + || \phi(\A\x_1) - \x_2 || \\ &= || \phi(\A\x_1) - \x_1 || + || \phi(\A\x_2) - \x_2 || \leq 2 \eta,
    \end{align*}
    which implies that:
    \begin{equation*}\label{eq:estimate_distance_stabilizers}
        \eta \geq \frac{|| \x_1 - \x_2 ||}{2}.
    \end{equation*}
    Since the estimation above holds for any $\x_1, \x_2$ with $\phi(\A\x_1) = \phi(\A\x_2)$, it holds for $\sigma(\phi)$, thus concluding the proof.
\end{proof}

As a consequence of Proposition \ref{prop:existence_of_stabilizer}, if $\phi$ is the constant operator (having $C_\phi^\epsilon=0$ as observed in Example \ref{example:constant_reconstructor}), it gets $\sigma(\phi) = \infty$, which implies that for any $\gamma$, the accuracy of $\Psi = \gamma \circ \phi$ will be zero, whenever $\X$ is unbounded.

Now, in the following proposition, we show that  a sequence of functions $\{ \phi_k \}_{k \in \mathbb{N}}$ approximating  $\Psi\in \mathcal{R}_\eta$, i.e.:
$$
\lim_{k \to \infty} \sup_{\y^\delta \in \Y^\epsilon} || \phi_k(\y^\delta) - \Psi(\y^\delta) || = 0.
$$
can be exploited to construct a good stabilizer.

\begin{proposition}\label{thm:approximators_are_stabilizers}
    Given a reconstructor $\Psi: \R^m \to \R^n$ with local Lipschitz constant $L^\epsilon(\Psi, \Y) < 1$ and a sequence of functions $\{ \phi_k \}_{k \in \mathbb{N}}$ approximating  $\Psi$, there exists $K \in \mathbb{N}$ such that for any $k \geq K$, $C^\epsilon_{\phi_k} <1$.
\end{proposition}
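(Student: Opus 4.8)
The plan is to unfold Definition \ref{def:stabilizers} and observe that, for $\phi_k$, only a local-Lipschitz-type bound actually has to be checked, and then to obtain that bound by comparing $\phi_k$ with its limit $\Psi$. The second requirement in Definition \ref{def:stabilizers} --- existence of $\gamma$ with a reconstructor equal to $\gamma\circ\phi_k$ --- costs nothing: $\phi_k:\R^m\to\R^n$ is continuous, hence itself a reconstructor, and $\gamma=\mathrm{id}_{\R^n}$ works (when $\phi_k$ is close to injective one may instead post-compose an accurate neural network $\gamma$, as around Proposition \ref{prop:existence_of_stabilizer}; this is the StNN construction). For the first requirement, setting $e':=\phi_k(Ax+e)-\phi_k(Ax)$ makes the identity \eqref{eq:stabilizer_constant} automatic, so the statement reduces to producing $K\in\N$ such that for all $k\geq K$
$$L^\epsilon(\phi_k,\Y)=\sup_{\substack{x\in\X\\ 0<||e||\leq\epsilon}}\frac{||\phi_k(Ax+e)-\phi_k(Ax)||}{||e||}<1 ,$$
and then taking $C^\epsilon_{\phi_k}:=L^\epsilon(\phi_k,\Y)$.

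First I would bound $L^\epsilon(\phi_k,\Y)$ in terms of $L^\epsilon(\Psi,\Y)$ plus the quality of the approximation. For $x\in\X$ and $0<||e||\leq\epsilon$, inserting $\pm\Psi(Ax+e)$ and $\pm\Psi(Ax)$ and using the triangle inequality,
$$||\phi_k(Ax+e)-\phi_k(Ax)||\leq||\Psi(Ax+e)-\Psi(Ax)||+||(\phi_k-\Psi)(Ax+e)||+||(\phi_k-\Psi)(Ax)|| ,$$
where the first term is at most $L^\epsilon(\Psi,\Y)\,||e||$ by \eqref{eq:definition_local_lip_constant}. Dividing by $||e||$ and passing to the supremum,
$$L^\epsilon(\phi_k,\Y)\leq L^\epsilon(\Psi,\Y)+\sup_{\substack{x\in\X\\ 0<||e||\leq\epsilon}}\frac{||(\phi_k-\Psi)(Ax+e)||+||(\phi_k-\Psi)(Ax)||}{||e||} .$$
Since $L^\epsilon(\Psi,\Y)<1$ by hypothesis, it is enough to show the last supremum tends to $0$ as $k\to\infty$; then for all large $k$ the right-hand side falls below $1$, which yields the threshold $K$.

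The main obstacle is precisely this last step: plain $||\cdot||_\infty$ (or $L^2$) convergence $\phi_k\to\Psi$ does not control that supremum, because $||e||$ in the denominator may be arbitrarily small. What one actually needs is Lipschitz-type convergence of the approximators, $L^\epsilon(\phi_k-\Psi,\Y)\to0$, equivalently $L^\epsilon(\phi_k,\Y)\to L^\epsilon(\Psi,\Y)$, so the hypothesis ``sequence of approximators'' must be read in this somewhat stronger sense --- which is harmless in the constructions used here. For $\phi_k=\circ_{i=1}^k\mathcal{T}_i$ coming from the iterative schemes around \eqref{eq:from_iterative_to_approximator}, this is most transparent when the iteration maps $\mathcal{T}_i$ are affine: then $\phi_k$ is affine with a fixed linear part $M_k$, so $L^\epsilon(\phi_k,\Y)=||M_k||_2$, and $M_k\to(A^TA+\lambda L^TL)^{-1}A^T$ by \eqref{eq:tik_formulation}, hence $L^\epsilon(\phi_k,\Y)\to L^\epsilon(\Psi^{\lambda,L},\Y)<1$. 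In summary: (i) discard requirement 2 of Definition \ref{def:stabilizers} and reduce requirement 1 to $L^\epsilon(\phi_k,\Y)<1$; (ii) split $\phi_k(Ax+e)-\phi_k(Ax)$ as above to get $L^\epsilon(\phi_k,\Y)\leq L^\epsilon(\Psi,\Y)+o(1)$; (iii) discharge the $o(1)$ from Lipschitz convergence of $\{\phi_k\}$; (iv) combine with $L^\epsilon(\Psi,\Y)<1$ to fix $K$ and set $C^\epsilon_{\phi_k}=L^\epsilon(\phi_k,\Y)$.
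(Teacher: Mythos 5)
Your proposal follows the paper's own route almost step for step: the same reduction of Definition \ref{def:stabilizers} to the single condition $L^\epsilon(\phi_k,\Y)<1$ via $e':=\phi_k(Ax+e)-\phi_k(Ax)$, and the same triangle-inequality splitting through $\Psi$. The one place you diverge is the place where you are \emph{more} careful than the paper. The paper's proof bounds
$$
L^\epsilon(\phi_k,\Y)\;\leq\;\sup_{\substack{x^{gt}\in\X\\ \|e\|\leq\epsilon}}\frac{\|\Psi(Ax^{gt}+e)-\Psi(Ax^{gt})\|+2c_k}{\|e\|}
$$
with $c_k=\sup_y\|\phi_k(y)-\Psi(y)\|\to 0$, and then asserts that this forces $L^\epsilon(\phi_k,\Y)\to L^\epsilon(\Psi,\Y)$. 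As you correctly point out, this does not follow: for any fixed $c_k>0$ the right-hand supremum is $+\infty$, since $\|e\|$ ranges down to $0$ while the numerator stays bounded below by $2c_k$. Uniform (sup-norm) convergence of $\phi_k$ to $\Psi$ simply does not control a difference quotient. Your fix --- reading ``sequence of approximators'' as convergence in the Lipschitz seminorm, $L^\epsilon(\phi_k-\Psi,\Y)\to 0$, and verifying this directly for the affine iterations $\phi_k=\circ_{i=1}^k\mathcal{T}_i$ used in the CGLS/Tikhonov construction, where $L^\epsilon(\phi_k,\Y)=\|M_k\|_2\to\|(A^TA+\lambda L^TL)^{-1}A^T\|_2$ --- is the right repair, and it covers every instance in which the proposition is actually invoked in the paper. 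So: same architecture of proof, but you have identified and closed a genuine gap that the paper's own argument leaves open; the statement as written needs either your strengthened hypothesis or a restriction of the supremum in \eqref{eq:definition_local_lip_constant} to perturbations bounded away from zero.
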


\begin{proof}
    Consider $\x^{gt} \in X$ and $\e \in \R^m$ with $||\e|| \leq \epsilon$. To prove the result, we need to show that:
    $$
        \phi_k(\A\x^{gt} + \e) = \phi_k(\A\x^{gt}) + \e' \ {\mbox for}\ k \geq K,
    $$
    with $||\e'|| = C^\epsilon_\phi ||\e||$ and $C^\epsilon_\phi \in [0, 1)$. \\    
    Let $\e' := \phi_k(\A\x^{gt} + \e) - \phi_k(\A\x^{gt})$, then:
    \begin{align*}
        ||\e'|| &= || \phi_k(\A\x^{gt} + \e) - \phi_k(\A\x^{gt}) || \\ &\leq L^\epsilon(\phi_k, \Y)|| \A\x^{gt} + \e - \A\x^{gt} || = L^\epsilon(\phi_k, \Y)||\e||,
    \end{align*}
    which implies that $C^\epsilon_{\phi_k} \leq L^\epsilon(\phi_k, \Y)$. Since $\{ \phi_k \}_{k \in \mathbb{N}}$ is a sequence of approximators of $\Psi$, for any $k \in \mathbb{N}$ there is a constant $c_k$ such that $|| \phi_k(\y^\delta) - \Psi(\y^\delta) || \leq c_k$ and  $c_k \to 0$ as $k \to \infty$. Consequently, it holds:
    \begin{align*}
    \begin{split}
        &L^\epsilon(\phi_k, \Y) = \sup_{\substack{\x^{gt} \in \X \\ ||\e|| \leq \epsilon}} \frac{|| \phi_k(\A\x^{gt} + \e) - \phi_k(\A\x^{gt}) ||}{|| \e ||} \\ &\leq \sup_{\substack{\x^{gt} \in \X \\ ||\e|| \leq \epsilon}} \frac{|| \phi_k(\A\x^{gt} + \e) - \Psi(\A\x^{gt} + \e) || + || \phi_k(\A\x^{gt}) - \Psi(\A\x^{gt})|| + || \Psi(\A\x^{gt} + \e) - \Psi(\A\x^{gt}) ||}{|| \e ||} \\ &\leq \sup_{\substack{\x^{gt} \in \X \\ ||\e|| \leq \epsilon}} \frac{|| \Psi(\A\x^{gt} + \e) - \Psi(\A\x^{gt}) || + 2c_k}{|| \e ||},
    \end{split}
    \end{align*}
    which implies that $L^\epsilon(\phi_k, \Y) \to L^\epsilon(\Psi, \Y)$ as $k \to \infty$. Since $L^\epsilon(\Psi, \Y) < 1$,  $\exists \: K \in \mathbb{N}$ such that for any $k \geq K$, $L^\epsilon(\phi_k, \Y)<1$. For those values of $k$, $C^\epsilon_{\phi_k} \leq L^\epsilon(\phi_k, \Y)<1$.
\end{proof}

% \textcolor{red}{Dare un esempio qui di sequenza rimane un po' in sospeso...}

\subsection{Tikhonov stabilizers}\label{ssec:TikStabilizer} %%%%%%%%%%%%%%%%%%%%%%

% \begin{example}\label{ex:TikStabilizer}
If we now consider the Tikhonov reconstructor $\Psi=\Psi^{\lambda, \boldsymbol{L}}$ introduced in Example \ref{ex:Tikh}, it is possible to construct a sequence $\{ \phi_k \}_{k \in \mathbb{N}}$ of $\epsilon$-stabilizers. 
In fact, recalling that $L^\epsilon(\Psi, \Y)<1$ for suitable $\lambda > 0$ as stated in Proposition \ref{prop:tik_is_stable}, a simple way to generate the sequence $\{ \phi_k \}_{k \in \mathbb{N}}$ is the following. 
Consider a convergent iterative algorithm for the solution of \eqref{eq:tik_reg_reconstructor}:
$$
    \begin{cases}
    \x^0 \in \R^{n}, \\
    \x^{k+1} = \mathcal{T}_k(\x^k, \y^\delta),
    \end{cases}
$$
where $\mathcal{T}_k(\x^k, \y^\delta)$ models the application of the $k$-th iterate of the algorithm, starting from $\x^k$ and with datum $\y^\delta$.
To set an example, the Conjugate Gradient for Least Squares (CGLS) algorithm is an iterative method solving the normal equations associated with  \eqref{eq:tik_reg_reconstructor}.
Now, for any $k \in \mathbb{N}$ we can define the {\em Tikhonov stabilizers} $\phi_k$ to be the composition of the first $k$ iterations of the algorithm, i.e.:
\begin{equation}\label{eq:from_iterative_to_approximator}
    \phi_k(\cdot) = \bigcircop_{i=1}^k \mathcal{T}_i (\cdot, \y^\delta).
\end{equation}
For the convergence property of the algorithm, $\{ \phi_k \}_{k \in \mathbb{N}}$ is a sequence of functions approximating $\Psi^{\lambda, \boldsymbol{L}}$ and with $C^\epsilon_{\phi_k} <1$ for suitable $k \geq K$. Such property will be fundamental for the stabilization technique we propose in Subsection \ref{ssec:StNN}.
% \end{example}

\section{Neural networks for the solution of linear inverse problems}\label{sec:nn_as_reconstructors}
In this section, the theoretical results previously outlined are applied to scenarios where reconstructors are operationalized through neural networks. Concurrently, we delineate our methodologies aimed at advancing current state-of-the-art approaches. Figure \ref{fig:graph_abstract} offers a detailed schematic that encapsulates all the approaches considered within this study. The 'Tik' label refers to the Tikhonov reconstructor $\Psi^{\lambda, \boldsymbol{L}}$, defined in Example \ref{ex:Tikh}.

\begin{figure}
    \centering
    \includegraphics[width=0.8\linewidth]{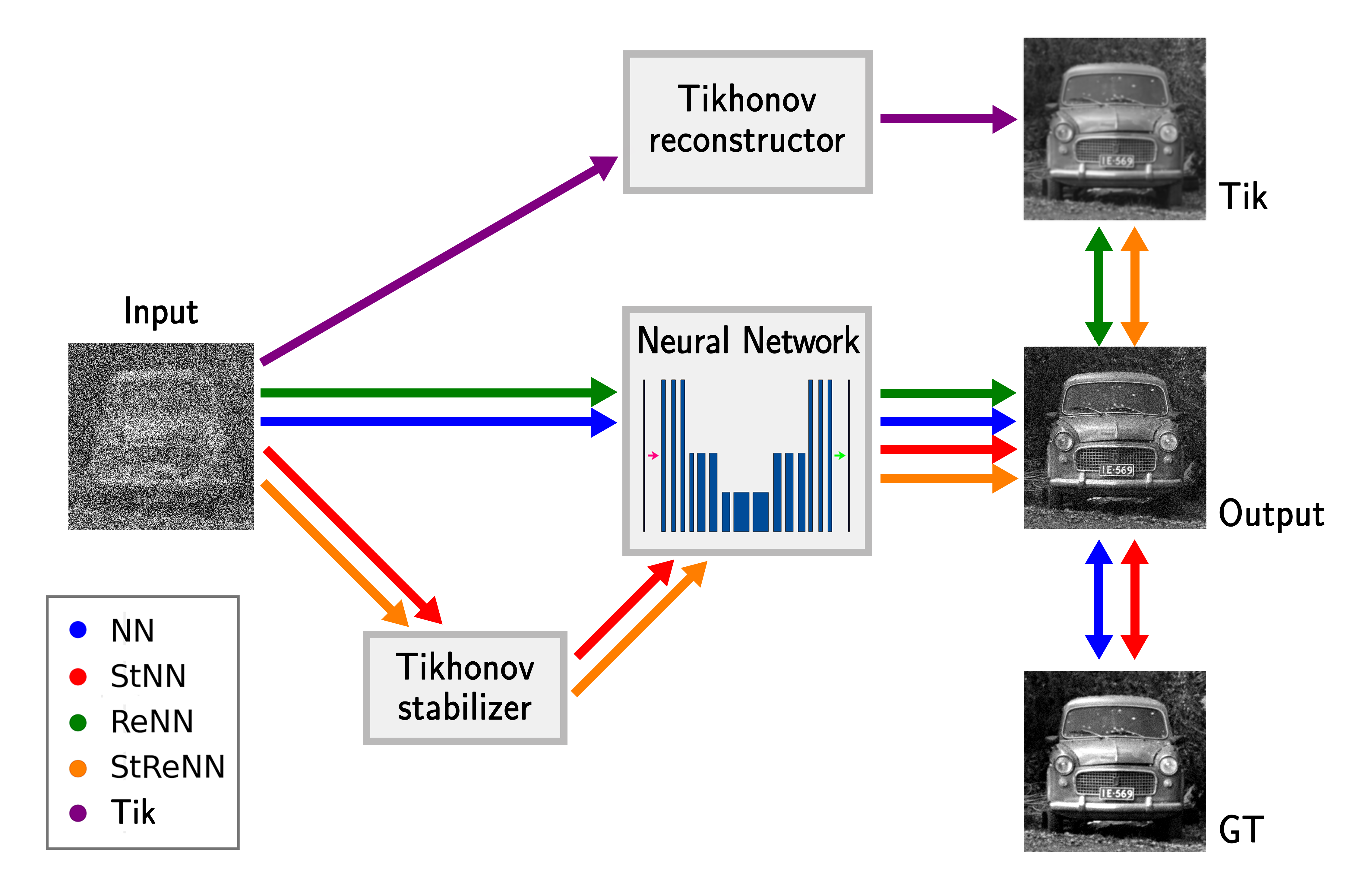}
    \caption{A schematic representation of the proposed methods.}
    \label{fig:graph_abstract}
\end{figure}

\subsection{Parameter-dependent families of reconstructors}
We now consider a family of reconstructors $\{\Psi_\Theta \}_{\Theta \in \R^s}$, depending on a vector of parameters $\Theta$, approximating a reconstructor $\Psi$ to solve problem \eqref{eq:disc_fred_eq}.
We prove in the following theorem that the stability of $\Psi_\Theta$ is strongly related to the stability of  $\Psi$.
\begin{theorem}[Approximation Theorem for Reconstructors]\label{thm:stability_theorem}
 Let $\Psi$ be an $\eta^{-1}$-accurate reconstructor and let $\{ \Psi_\Theta \}_{\Theta \in \R^s}$ be a set of reconstructors with accuracy $\eta_\Theta^{-1}$ for any $\Theta$. We define, for any $\Theta \in \R^s$:
 $$ \Delta(\Theta) := \sup_{\x^{gt} \in \X} || \Psi_\Theta(\A\x^{gt}) - \Psi(\A\x^{gt}) || $$
 and:
 $$\Delta_\epsilon (\Theta):= \sup_{\substack{\x^{gt} \in \X \\ || \e || < \epsilon}} || \Psi_\Theta(\A\x^{gt} + \e) - \Psi(\A\x^{gt} + \e) ||.$$
If $\Delta(\Theta) \to 0$ when $\Theta \to \Theta^*$, then: 
	\begin{equation}
	\lim_{\Delta(\Theta) \to 0}\eta_\Theta =\eta.
         \label{eq:limeta}
	\end{equation}
Moreover, if $\Delta_\epsilon(\Theta) \to 0$ when $\Theta \to \Theta_\epsilon^*$, then:
		\begin{equation}
	\lim_{\Delta_\epsilon(\Theta) \to 0} C^\epsilon_{\Psi_\Theta} = C^\epsilon_\Psi.
        \label{eq:limc}
	\end{equation}
\end{theorem} 

\begin{proof}
	Consider $\x^{gt} \in \X$. Since:
	\begin{align*}
		|| \Psi_\Theta(\A\x^{gt}) - \x^{gt} || \leq || \Psi(\A\x^{gt}) - \x^{gt} || + || \Psi_\Theta(\A\x^{gt}) - \Psi(\A\x^{gt}) ||
	\end{align*}
	and:
	\begin{align*}
		|| \Psi_\Theta(\A\x^{gt}) - \x^{gt} || \geq || \Psi(\A\x^{gt}) - \x^{gt} || - || \Psi_\Theta(\A\x^{gt}) - \Psi(\A\x^{gt}) ||,
	\end{align*}
	it holds that:
	$$
	| \> || \Psi_\Theta(\A\x^{gt}) - \x^{gt} || - || \Psi(\A\x^{gt}) - \x^{gt} || \> | \leq || \Psi_\Theta(\A\x^{gt}) - \Psi(\A\x^{gt}) || \leq \Delta(\Theta),
	$$
	which implies that $|| \Psi_\Theta(\A\x^{gt}) - \x^{gt} || \to || \Psi(\A\x^{gt}) - \x^{gt} ||$ as $\Delta(\Theta) \to 0$ and consequently, $\eta_\Theta \to \eta$ as $\Delta(\Theta) \to 0$. \\
	
	Now, consider $\epsilon>0$ and  $\e \in \R^m$ with $|| \e || \leq \epsilon$. A similar computation shows that:
	\begin{align*}
		| \> || \Psi_\Theta(\A\x^{gt} + \e) - \x^{gt} || - || \Psi(\A\x^{gt} + \e) - \x^{gt} || \> | \leq \Delta_\epsilon(\Theta),
	\end{align*}
	which implies that $ || \Psi_\Theta(\A\x^{gt} + \e) - \x^{gt} || \to || \Psi(\A\x^{gt} + \e) - \x^{gt} ||$ for $\Delta_\epsilon(\Theta) \to 0$. Consequently, for $\Delta_\epsilon(\Theta) \to 0$,
	\begin{align*}
		C^\epsilon_{\Psi_\Theta} = \sup_{\substack{\x^{gt} \in \X \\ || \e || \leq \epsilon}} \frac{|| \Psi_\Theta(\A\x^{gt}+e) -  \x^{gt}|| - \eta_\Theta}{||\e||} \to \sup_{\substack{\x^{gt} \in \X \\ || \e || \leq \epsilon}} \frac{|| \Psi(\A\x^{gt}+e) -  \x^{gt}|| - \eta}{||\e||} = C^\epsilon_\Psi,
	\end{align*}
	which concludes the proof.
\end{proof}

\begin{corollary}\label{cor:estimate_accuracy_nn}
    For any $\Theta \in \R^s$, it holds:
    \begin{equation*}
        \eta_\Theta \leq \eta + \Delta(\Theta).
    \end{equation*}
\end{corollary}

\begin{proof}
    Consider $\x^{gt} \in \X$. Then:
	\begin{align*}
		|| \Psi_\Theta(\A\x^{gt}) - \x^{gt} || \leq || \Psi_\Theta(\A\x^{gt}) -  \Psi(\A\x^{gt})|| + || \Psi(\A\x^{gt}) - \x^{gt} ||.
	\end{align*}
	Since $|| \Psi_\Theta(\A\x^{gt}) -  \Psi(\A\x^{gt})|| \leq \Delta(\Theta)$ by hypothesis and  $|| \Psi(\A\x^{gt}) - \x^{gt} || \leq \eta$ since $\Psi$ is $\eta^{-1}$-accurate, then:
	\begin{equation*}
		|| \Psi_\Theta(\A\x^{gt}) - \x^{gt} || \leq \eta + \Delta(\Theta),
	\end{equation*}
	which shows that $\eta_\Theta \leq \eta + \Delta(\Theta)$.
\end{proof}

Note that $\Delta(\Theta)$ and $\Delta_\epsilon(\Theta)$ are, in general, not independent, as proved in the following proposition.

\begin{proposition}\label{thm:Delta_leq_Delta_epsilon}
    For any $\epsilon>0$, let $\Delta(\Theta)$ and $\Delta_\epsilon(\Theta)$ be the quantities defined in Theorem \ref{thm:stability_theorem}. Then:
    \begin{equation*}
        \Delta(\Theta) \leq \Delta_\epsilon(\Theta).
    \end{equation*}
\end{proposition}
\begin{proof}
    Observe that, by definition of $\Y$ and $\Y^\epsilon$, $\Delta(\Theta)$ and $\Delta_\epsilon(\Theta)$ can be rewritten as:
    \begin{align*}
        &\Delta(\Theta) = \sup_{\y \in \Y} || \Psi_\Theta(\y) - \Psi(\y) ||, \\
        &\Delta_\epsilon(\Theta) = \sup_{\y \in \Y^\epsilon} || \Psi_\Theta(\y) - \Psi(\y) ||, 
    \end{align*}
    where $\Y^\epsilon = \{ \y + \e; \y \in \Y, || \e || \leq \epsilon \} \supseteq \Y$. The result follows from the property that the supremum of a set must be larger than the supremum of its subsets.
\end{proof}

An insight on the stability properties of $\Psi_\Theta$ can be obtained by the following proposition.

\begin{proposition}\label{prop:bound_on_delta_theta}
    Let $\Psi_\Theta$ be a reconstructor parameterized by $\Theta \in \R^s$, approximating a reconstructor $\Psi$ with error $\Delta(\Theta) > 0$. Let $\eta_\Theta^{-1}$ and $\eta^{-1}$ be the accuracy of $\Psi_\Theta$ and $\Psi$, respectively. If:
    \begin{equation}\label{eq:upper_bound_delta_theta}
        \Delta(\Theta) \leq \bar{\eta}(\A, \epsilon, \X) - \eta
    \end{equation}
    for a fixed $\epsilon>0$, where $\bar{\eta}(\A, \epsilon, \X)$ is the constant defined in Corollary \ref{cor:bound_on_acc}, then $C_{\Psi_\Theta}^\epsilon \geq 1$.
\end{proposition}

\begin{proof}
    Let $\epsilon > 0$ be fixed. By Corollary \ref{cor:estimate_accuracy_nn}, the accuracy of $\Psi_\Theta$ can be estimated as $\eta_\Theta \leq \eta + \Delta(\Theta)$. Consequently, by Corollary \ref{cor:bound_on_acc}, if $\eta + \Delta(\Theta) \leq \bar{\eta}(\A, \epsilon, \X)$, then $\eta_\Theta \leq \bar{\eta}(\A, \epsilon, \X)$, which implies that $C_{\Psi_\Theta}^\epsilon \geq 1$.
\end{proof}

In the following paragraphs, we will analyze two particular families of reconstructors $\{\Psi_\Theta \}_{\Theta \in \R^s}$.

%%%%%%%%%%%%%%%%%%%%%%%%%%%%%%%%%%%%%%%%%%%%%%%%%%%%%%%%%%%%%%%%%%%%%%%%%%%%%%%%%
\subsection{Neural Networks as reconstructors: the NN approach}\label{ssec:NN}
Now we consider the set of neural networks defined by a fixed architecture as the family  $\{ \Psi_\Theta \}_{\Theta \in \R^s}$.

\begin{definition}
	Given a neural network architecture $\mathcal{A} = (\nu, S)$ where $\nu = (\nu_0, \nu_1, \dots, \nu_L) \in \mathbb{N}^{L+1}$, $\nu_0 =m, \nu_L=n$, defines the width of each layer and $S = (S_{1, 1}, \dots, S_{L, L}), S_{j, k} \in \R^{\nu_j \times \nu_k}$ is the set of matrices representing the skip connections, we define the parametric family of neural network reconstructors with architecture $\mathcal{A}$, parameterized by $\Theta \in \R^s$, as
	$$
	\mathcal{F}_\Theta^\mathcal{A} = \{ \Psi_\Theta : \R^{m} \to \R^{n}; \Theta \in \R^s \},
	$$
	where $\Psi_\Theta(\y^\delta) = \z^L$ is given by:
	\begin{align}
		\begin{cases}
			\z^0 = \y \\
			\z^{l+1} = \rho(W^l \z^l + b^l + \sum_{k=1}^l S_{l, k} \z^k) \quad \forall \> l = 0, \dots, L-1 
		\end{cases}
	\end{align}
	and
	$W^l \in \R^{\nu_{l+1} \times \nu_l}$ is the weights matrix, $b^l \in \R^{\nu_{l+1}}$ is the bias vector.
\end{definition}

Given $\mathcal{D} \subseteq \X$, consider the dataset $\D = \{ (\y_i^\delta, \x^{gt}_i); \x^{gt}_i \in \mathcal{D} \}_{i=1}^{N_\D}$ of images according to \eqref{eq:forward_problem}.
%such that, for any $i = 1, \dots, N_\D$,
Training a neural network to solve the inverse problem \eqref{eq:forward_problem} results in finding the parameters $\Theta^*$ such that the associated reconstructor $\Psi_{\Theta^*} \in \mathcal{F}_\Theta^\mathcal{A}$ satisfies:
\begin{equation}
 \Psi_{\Theta^*} \in   \arg\min_{\Psi_\Theta \in \mathcal{F}_\Theta^\mathcal{A}} \frac{1}{N_\D} \sum_{i=1}^{N_\D} \ell (\Psi_\Theta (\y_i^\delta), \x^{gt}_i),
    \label{eq:nnmin}
\end{equation}
where  $\delta \geq 0$ and $\ell: \R^{n} \times \R^{n} \to \R_+$ is the loss function. \\
In this work, we consider as reconstructors $\Psi_\Theta$ the neural networks trained with the Mean Squared Error (MSE) loss.  We will name this family as NN, in the following.
We first apply NN onto noiseless data ($\delta=0$), thereby \eqref{eq:nnmin} corresponds to:
  \begin{equation}\label{eq:nn_training}
    \min_{\Psi_\Theta \in \mathcal{F}_\Theta^\mathcal{A}} \sum_{i=1}^{N_\D} || \Psi_\Theta(\y_i) - \x^{gt}_i||_2^2 = \min_{\Psi_\Theta \in \mathcal{F}_\Theta^\mathcal{A}} \sum_{i=1}^{N_\D} || \Psi_\Theta(\A\x^{gt}_i) - \Psi^\dagger(\A\x^{gt}_i)||_2^2,
\end{equation}
which results in the minimization of $\Delta(\Theta)$ as introduced in Theorem \ref{thm:stability_theorem} with $\Psi=\Psi^\dagger$.

We observe that when $\A$ is ill-conditioned, $\bar{\eta}(\A, \epsilon, \X)$ is large. This becomes particularly apparent when  $\X = \R^n$, as under these circumstances, $\bar{\eta}(\A, \epsilon, \X)$  is bounded below by a quantity depending on $C(\A) = \frac{1 - \sigma_n}{\sigma_n}$. Additionally, the value of $\Delta(\Theta^*)$ derived from NN training likely meets the established inequality in Proposition \ref{prop:bound_on_delta_theta}, which leads to instability. This confirms that effective neural network training can produce a very accurate but unstable reconstructor $\Psi_\Theta$.

A widely adopted strategy to bolster robustness in neural networks is known as noise injection. This technique involves adding noise to the input of the network during its training phase. In this context, the set of reconstructors $\Psi_\Theta$, referred to as iNN, is defined by a neural network trained through the following equation:

\begin{equation}\label{eq:nn_training_NI}
\min_{\Psi_\Theta \in \mathcal{F}\Theta^\mathcal{A}} \sum_{i=1}^{N_\D} || \Psi_\Theta(\y_i^\delta) - \x^{gt}_i||_2^2,
\end{equation}
where $\delta > 0$. Research detailed in \cite{training_with_noise_injection} has demonstrated that this approach effectively introduces a Tikhonov regularization term into the loss function. Although this technique, as described in \cite{solving_inverse_problems_data_driven}, enhances the stability of the resultant network, the impact of noise injection on the accuracy of the model remains somewhat ambiguous. Furthermore, the optimal amount of noise to be added to each input to optimize the balance between stability and accuracy is still a subject of investigation.

%%%%%%%%%%%%%%%%%%%%%%%%%%%%%%%%%%%%%%%%%%%%%%%%%%%%%%%%%%%%%%%%%%%%%%%%%%%%%%%%%
\subsection{Regularized NN-based reconstructors: the ReNN approach}\label{ssec:ReNN}
To develop a reconstructor with improved stability compared to standard neural networks (NN), we harness the properties of Tikhonov regularization. It is important to note that a Tikhonov regularized reconstructor $\Psi^{\lambda, \boldsymbol{L}}$ achieves stability for an appropriately chosen regularization parameter, as delineated in Proposition \ref{prop:tik_is_stable}. This methodology will be referred to as the Regularized Neural Network (ReNN), denoted as $\Psi_\Theta^{\lambda, \boldsymbol{L}}$.
ReNN is defined by training a neural network with a new loss $\ell$ as:
\begin{equation}\label{eq:renn_training}
  \Psi_\Theta^{\lambda, \boldsymbol{L}} \in  \arg\min_{\Psi_\Theta \in \mathcal{F}_\Theta^\mathcal{A}} \sum_{i=1}^{N_\D} || \Psi_\Theta(\y^\delta_i) - \Psi^{\lambda, \boldsymbol{L}}(\y^\delta_i)||_2^2,
\end{equation}
with $\delta > 0$. We underline that  ReNN does not require any ground-truth solutions $\x^{gt}$ since the target is computed from the corrupted datum $\y^\delta$ via the Tikhonov-regularized reconstructor.
Furthermore, in the training of ReNN, noise is present not solely to the input of the neural network model, as is the case with iNN, but also to the input of the Tikhonov-regularized reconstructor, which is responsible for generating the target.
In the following, we consider for simplicity the case $\X=\R^n$, but similar results hold for a general $\X \subset \R^n$.

Starting from inequality \eqref{eq:upper_bound_delta_theta} it is easy to notice that \eqref{eq:renn_training} corresponds to the minimization of $\Delta_\epsilon(\Theta)$ in Theorem \ref{thm:stability_theorem}. Moreover, by Theorem \ref{thm:Delta_leq_Delta_epsilon}, if $\Delta_\epsilon(\Theta)$ is small, as it is common when $\Psi_\Theta$ is a neural network, then $\Delta(\Theta) \in [0, \Delta_\epsilon(\Theta)]$ is also small. Regarding the right hand side $\bar{\eta}(\A, \epsilon, \X) - \eta$ of \eqref{eq:upper_bound_delta_theta}, it is noted that in this instance $\eta=\eta(\lambda)$ and $\eta(\lambda) \to \infty$ for $\lambda \to \infty$. Consequently,  for sufficiently large values of $\lambda$, it is probable that ReNN does not fulfill the conditions of  \eqref{eq:upper_bound_delta_theta}.

Moreover, minimizing $\Delta_\epsilon(\Theta)$
is crucial for enforcing the method's stability,  as proven by Theorem \ref{thm:stability_theorem}, where we have shown that in our hypothesis the stability constant $C^\epsilon_{\Psi_\Theta^{\lambda,\boldsymbol{L}}} < 1$ for sufficiently small $\Delta_\epsilon(\Theta)$.
 Hence,  effective training of ReNN should produce an accurate and stable reconstructor. The pseudocode to compute $\Psi^{\lambda, \boldsymbol{L}}_\Theta$ is given in Algorithm \ref{alg:renn_algorithm}.\\

\begin{algorithm}[hbt!]
    \caption{Regularized Neural Network (ReNN)}\label{alg:renn_algorithm}
\begin{algorithmic}
\STATE{{\bf input} a collection $\{ \x^{gt}_i \}_{i=1}^{N_\D} \subseteq \X$ of data points, a noise level $\delta>0$, $\A \in \R^{m \times n}$ and a stable reconstrctor $\Psi^{\lambda, \boldsymbol{L}}$\;}
    
\FOR{$i \gets 1:N_\D$}
\STATE{Sample $\e_i \sim \mathcal{N}(\boldsymbol{0}, \delta^2\I)$}
\STATE{Compute $\y^\delta_i \gets \A\x^{gt}_i + \e_i$\;}
\ENDFOR 
\STATE{
    Solve
    $$
        \min_{\Psi_\Theta \in \mathcal{F}_\Theta^\mathcal{A}} \sum_{i=1}^{N_\D} || \Psi_\Theta(\y^\delta_i) - \Psi^{\lambda, \boldsymbol{L}}(\y^\delta_i) ||_2^2.
    $$}  
\RETURN a trained ReNN $\Psi_\Theta$
\end{algorithmic}    
\end{algorithm}

%%%%%%%%%%%%%%%%%%%%%%%%%%%%%%%%%%%%%%%%%%%%%%%%%%%%%%%%%%%%%%%%%%%%%%%%%%%%%%%%%
\subsection{Stabilization on NN and ReNN: St- approaches} \label{ssec:StNN}
In the remainder of this section, we discuss an application of the stabilizers, introduced in Section \ref{sec:stabilizers}, to improve the stability of neural network-based reconstructors.
We propose new reconstructors $\Psi \in \mathcal{S}_\eta^\epsilon$, $\Psi=\gamma \circ \phi$ where $\gamma$ is a neural network based reconstructor. 
In particular, we consider $\phi$ as the Tikhonov $\epsilon$-stabilizer $\phi_k$ defined in Subsection \ref{ssec:TikStabilizer} and obtained by $k$ iterations of the CGLS algorithm, with a suitable $k$. % for the approximation of $\Psi^{\lambda, \boldsymbol{L}}$. 
%We will refer to this $\phi_k$ operator as Tikhonov stabilizer, as we have already proved in \ref{thm:approximators_are_stabilizers} that $C_{\phi_k}^\epsilon \leq 1$ for suitable values of $\lambda > 0$ and $k \in \mathbb{N}$. 
When $\gamma$ is chosen as NN, iNN, ReNN we obtain the $\epsilon$-stabilized reconstructors StNN, StiNN, and StReNN, respectively.

Note that, in this case, we can apply Theorem \ref{thm:stabilizers_stabilize} with $\Psi_1 = \gamma \circ \phi_k$ and $\Psi_2 = \gamma$, and whenever we choose $\phi_k$ such that: 
\begin{align}\label{eq:stabilization_condition}
    C_{\phi_k}^\epsilon \leq \frac{C_\gamma^\epsilon}{L^\epsilon(\gamma, \Y)},
\end{align}
the $\epsilon$-stabilized reconstructor $\Psi_1$ gets more stable than its unstabilized version  $\Psi_2$. 
We remark that it is always possible to find a Tikhonov stabilizer $\phi_k$ fitting \eqref{eq:stabilization_condition}, by suitably tuning $\lambda$ and $k$. 
Clearly, %especially when $C_\gamma^\epsilon \ll L^\epsilon(\gamma, \Y)$,
this comes at the expense of accuracy as discussed in Proposition \ref{prop:existence_of_stabilizer}, but we will show  that the accuracy does not suffer excessively, as evidenced by empirical results in Section \ref{sec:results}.

\section{Experimental setup}\label{sec:expsetup}
To assess the theoretical issues proposed, we conducted a series of experiments. It is important to highlight that all tests were carried out utilizing the same end-to-end U-net architecture.
For details on the architecture and its training, you can refer to  \cite{green_post_processing, evangelista2023rising}. 
In the following experiments, the stabilizer applied to all the considered reconstructors is obtained with  $k=3$ iterations of the CGLS algorithm on \eqref{eq:tik_reg_reconstructor}. 
The codes can be found in our GitHub repository at \url{https://github.com/loibo/ToBeOrNotToBeStable}. 

As a test case, we consider image deblurring \cite{hansen_deblurring_images}, a common inverse problem in imaging. In this case, $\A$ is a block circulant matrix with circulant blocks obtained from a convolutional kernel with periodic boundary conditions \cite{hansen_deblurring_images}. In our experiments, we use the $11 \times 11$ Gaussian blur filter $\mathcal{K}$:
\begin{equation}\label{eq:convolution_kernel}
    \mathcal{K}_{i, j} = e^{- \frac{1}{2} \frac{i^2 + j^2}{\sigma_G^2}}, \quad  i, j \in \{-5, \dots, 5\}    
\end{equation}
with variance $\sigma_G^2 = 1.3$.

\subsection{Dataset}\label{sec:dataset}
Our results have been tested on the famous GoPro image dataset (\url{https://seungjunnah.github.io/Datasets/gopro}), introduced in \cite{gopro_dataset}, which is constituted by high-resolution RGB images. All the images have been cropped into patches of size $256 \times 256$ (without overlapping), converted into grayscale,  normalized in $[0,1]$, and labeled as $\x^{gt}_i, i=1, \ldots {N_\D}$ with ${N_\D}=3614$. 
We generated the blurred and noisy data $\y_i^\delta = \A \x^{gt}_i+\e$, where $\e \sim \mathcal{N}(\mathbf{0}, \delta^2 \I)$.
We need the following data sets to train the three considered neural networks-based reconstructors.

\begin{itemize}
    \item For the NN training (see \eqref{eq:nn_training}) we consider the set $\D = \{(\y_i, \x^{gt}_i)\}_{i=1}^{N_\D}$ containing the couples of images constituted by the blurred noiseless datum  (i.e. $\delta=0$) and the exact $\x^{gt}_i$ target picture. 
    \item For the iNN training (see \eqref{eq:nn_training_NI}) we consider the set $\D_{\delta} = \{(\y^\delta_i, \x^{gt}_i)\}_{i=1}^{N_\D}$ containing the couples of images constituted by the blurred and noisy datum  $ \y^{\delta}_i$ and the exact $\x^{gt}_i$ target picture. 
    %where $\e_i \sim \mathcal{N}(\boldsymbol{0}, \delta^2  \I)$, 
    \item For the ReNN training (see \eqref{eq:renn_training}) we consider the set $\D^{\lambda, \boldsymbol{L}}_{\delta} = \{(\y^{\delta}_i, \Psi^{\lambda, \boldsymbol{L}}( \y^\delta_i)) \}_{i=1}^{N_\D}$ containing the couples of images constituted by the blurred and noisy datum $ \y^{\delta}_i$ and the target image computed by the Tikhonov reconstructor (using $\boldsymbol{L} = \I$ in \eqref{eq:tik_reg_reconstructor}). In particular, we choose $\lambda$ heuristically and we computed $\Psi^{\lambda, \boldsymbol{L}}(\y^\delta)$ by means of the CGLS algorithm \cite{hansen1998rank} to solve the normal equations of \eqref{eq:tik_reg_reconstructor}.
\end{itemize}
We finally split the $N_\D$ data samples into train and test subsets, with $N_{train} = 2503$ and $N_{test} = 1111$.

\subsection{Results evaluation}

In order to estimate in our experiments the accuracy and the stability constants of a given reconstructor $\Psi$ we compute the {\em empirical} accuracy $\hat{\eta}^{-1}$ and the {\em empirical} stability constant $\hat{C}^\epsilon_\Psi$, over the test set $\mathcal{TS}$.
They are respectively defined as:

\begin{equation}\label{eq:empirical_accuracy}
    \hat{\eta} = \sup_{\x^{gt} \in \mathcal{TS}} || \Psi(\A\x^{gt}) - \x^{gt} ||
\end{equation}
and:
\begin{equation}\label{eq:empirical_stability}
    \hat{C}^\epsilon_\Psi = \sup_{\x^{gt} \in \mathcal{TS}} \frac{|| \Psi(\A\x^{gt} + \e) - \x^{gt} || - \hat{\eta}}{|| \e ||}\,,
\end{equation}
where  $\e \sim \mathcal{N}(\mathbf{0}, \delta^2 \I$)  differs for each datum $\x^{gt} \in \mathcal{TS}$). 
Finally, we compute the  {\em empirical reconstruction error} on the test set as:
\begin{equation*}\label{eq:reconst_error}
   \mathcal{E}(\Psi, \delta) = \sup_{\x^{gt} \in \mathcal{TS}} ||\Psi(\y^\delta)-\x^{gt} ||.
\end{equation*}
To evaluate a single image reconstruction, we also compute the widely used Structural Similarity Index (SSIM) \cite{SSIM_definition}, taking values in $[0,1]$.

To augment the stochastic nature of our experiments, we replicated the tests on the test set $T=20$ times, each with different realizations of noise. In the following, we report the maximum value of the computed parameters $\hat\eta$ and $\hat{C}^\epsilon_\Psi$ over the $T$ experiments.

\section{Numerical Results}\label{sec:results}
% In this section, we report the results obtained in terms of accuracy, stability, and reconstruction error for the proposed reconstructors. 
% The aim of this section is firstly to confirm the main theoretical results proved in the first part of the paper, focusing on the proposed deep learning-based reconstructors, and second the effects of the stabilizer when there is extra noise than the parameters chosen for.

In this section, we present the outcomes achieved in terms of empirical accuracy, stability, and reconstruction error for the solvers proposed in this study. 
The objective of this section is twofold: firstly, to validate the key theoretical findings established in the previous part of the paper, with a particular emphasis on the deep learning-based reconstructors introduced in Section \ref{sec:nn_as_reconstructors}; and secondly, to examine the impact of the stabilizer in scenarios where the noise levels exceed those the parameters were initially selected for.

\subsection{Results with NN-based reconstructors}\label{sec:experiment_NN}

\begin{table}\sffamily
\centering
\begin{tabular}{ccc}
$\x^{gt}$ &  NN  & StNN \\
& \textit{(SSIM = 0.9864)} & \textit{(SSIM = 0.9142)} \\
\includegraphics[width=0.28\textwidth,trim=50 10 20 60,clip]{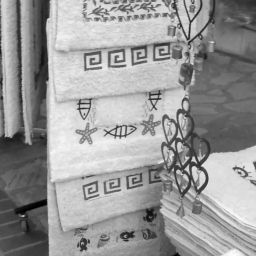}
& \includegraphics[width=0.28\textwidth,trim=50 10 20 60,clip]{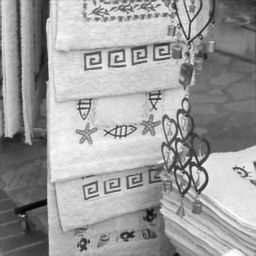}
& \includegraphics[width=0.28\textwidth,trim=50 10 20 60,clip]{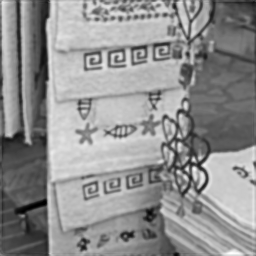}  \vspace{2mm}\\ 
$\y^\delta$ & NN & StNN \\
\textit{(SSIM = 0.8171)} & \textit{(SSIM = 0.0647)} &\textit{(SSIM = 0.8301)} \\
\includegraphics[width=0.28\textwidth,trim=50 10 20 60,clip]{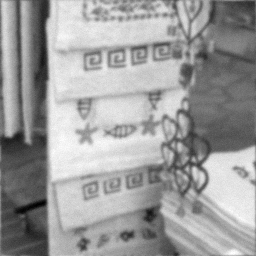} 
& \includegraphics[width=0.28\textwidth,trim=50 10 20 60,clip]{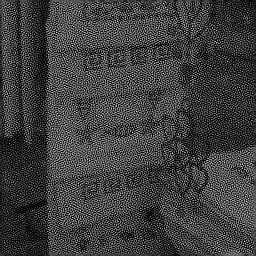} 
& \includegraphics[width=0.28\textwidth,trim=50 10 20 60,clip]{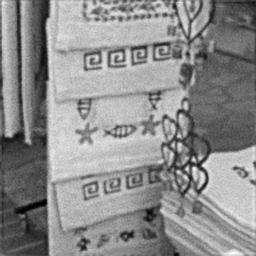}  \\
\end{tabular}
\captionof{figure}{Results obtained by the NN and StNN reconstructors on a single test image $y^{\delta}$ with $delta=0$ (first row) and $\delta=0.01$ (second row). The ground truth clean image is also reported for reference.}
\label{fig:noiseless_images}
\end{table}

We begin by considering the NN and iNN approaches, and their stabilized counterparts, StNN and StiNN, assuming the availability of ground truth images $\x^{gt}_i, i=1, \dots {N_\D}$.

The first experiment concerns NN and  StNN. The first row of Figure \ref{fig:noiseless_images} shows the reconstructions obtained with both the methods on one image $\y_i$ from the test set (without noise added).
To assess the stability of our frameworks concerning unseen noise on the data, we also tested the NN reconstructor on noisy images $\y_i^\delta=\y_i + \e_i$ with $\e_i \sim \mathcal{N}(\mathbf{0}, \delta^2 \I)$ and $\delta = 0.01$. The second row of Figure \ref{fig:noiseless_images} displays the reconstructions obtained on the same test image. 

\begin{tabular}{cc}
\begin{minipage}[b]{0.42\textwidth}
\centering
\begin{tabular}{l cc}
    \toprule
         & NN & StNN \\
        \midrule
        $\hat{\eta}^{-1}$ & 0.1203 & 0.0616 \\
        $\hat{C}^\epsilon_\Psi (\delta = 0.01)$ & 36.7298 & 0.1579 \\
        \bottomrule \vspace{5mm}
    \end{tabular}
    \captionof{table}{Values of empirical accuracy and $\epsilon$-stability constant obtained by NN and StNN reconstructors, trained with  $\delta = 0$.}\label{tab:table_NN}
\end{minipage}
&
\begin{minipage}[b]{0.42\textwidth}
    \centering
    \includegraphics[width=\linewidth]{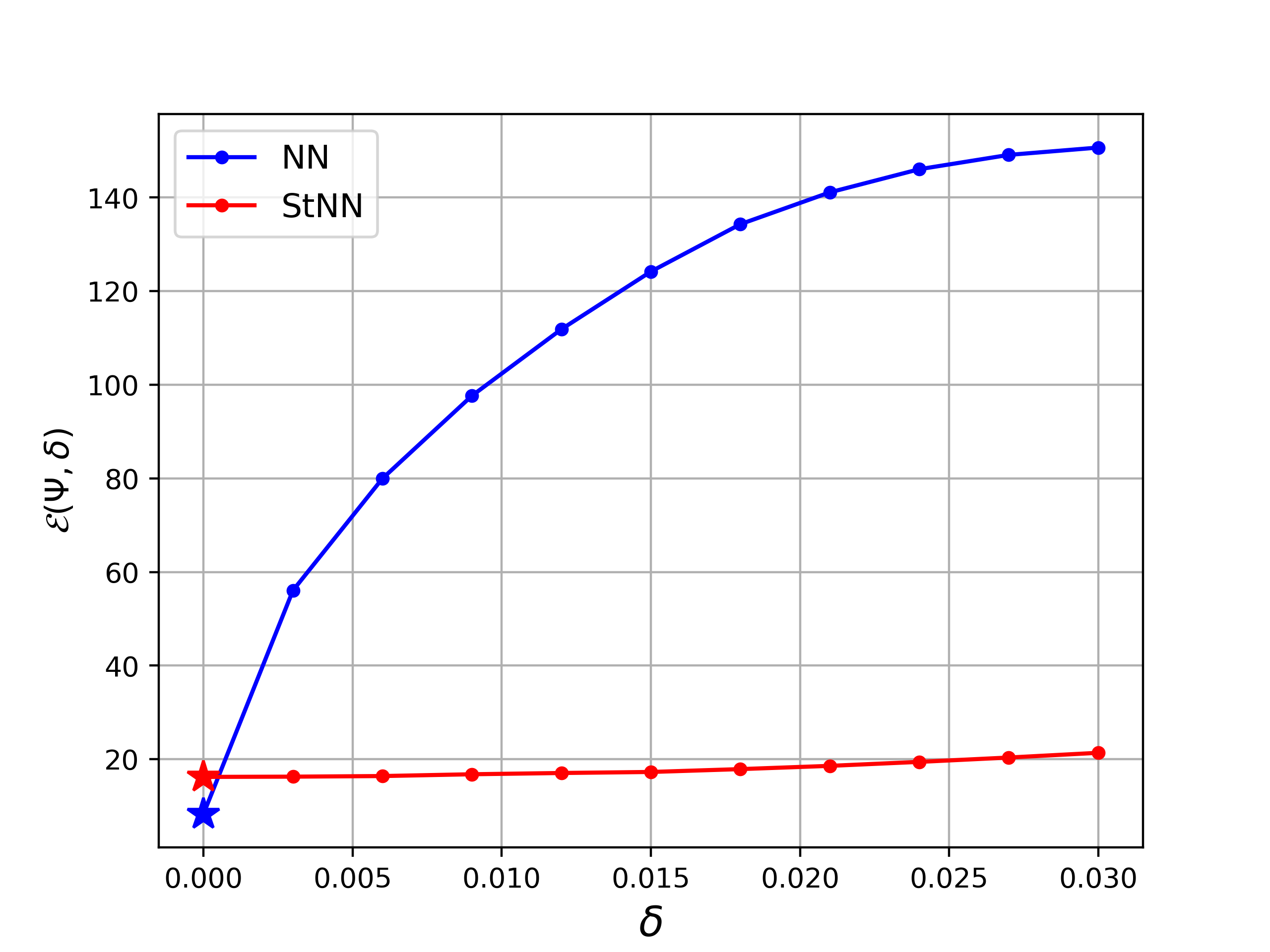}
    \captionof{figure}{Plots of the empirical error yielded by NN and StNN reconstructors for increasing values of $\delta$ in the test images.}\label{fig:results_ex_A}
\end{minipage}
\end{tabular}

From the images presented in Figure \ref{fig:noiseless_images} and their SSIM values, it is observable that the NN reconstructor excels in restoring the blurred image, yet it demonstrates its unreliability as soon as even a minimal amount of noise is added to the data. In contrast, StNN emerges as an effective compromise between accuracy, as evidenced by the high-quality image in the first row with noise-free data, and stability, highlighted by the superior quality of the StNN image compared to the NN one in the second row under noisy conditions.
The Table \ref{tab:table_NN} reports the values of the empirical accuracy $\hat\eta^{-1}$ and empirical stability constant $\hat{C}^\epsilon_\Psi$ for the considered methods on the whole test set. It confirms that there is a trade-off between accuracy and stability, as proved in Theorem \ref{thm:stab_vs_acc_tradeoff}, and that the stabilization strategy improves the value of $\hat{C}^\epsilon_\Psi$ for NN.

To further investigate the different behavior of the two reconstructors for increasing values of $\delta$, in Figure \ref{fig:results_ex_A} we plot the reconstruction error for $\delta \in [0,0.03]$. The value of $\delta=0$ used in the training is indicated with a star marker.
We note that the StNN curve is characterized by a notably flat trajectory, in contrast to the NN curve which exhibits a rapid increase. This observation aligns with and reinforces the insights gathered from previous analyses.

In the second experimental setting, we considered the iNN reconstructor, trained by \eqref{eq:nn_training_NI},  with $\delta = 0.025$.
Table \ref{tab:table_iNN} reports the empirical accuracy and stability computed for both iNN and its stabilized version, StiNN, when the methods are tested on data $\y_i^\delta$ with $\delta=0.025, 0.060, 0.125$, respectively. The table shows that injecting noise in the observed data during training produces slightly less accurate but far more stable reconstructors (as visible by comparing the results with unseen noise in Table \ref{tab:table_iNN} to those in Table \ref{tab:table_NN}).

\begin{table}
    \centering
    \begin{tabular}{l cc}
    \toprule
                     & iNN & StiNN  \\
     \midrule
        $\hat{\eta}^{-1}$ & 0.0707 & 0.0606  \\
        $\hat{C^\epsilon_\Psi} (\delta = 0.025)$ & 0.0899 & 0.0703  \\
        $\hat{C^\epsilon_\Psi} (\delta = 0.060)$ & 0.4309 & 0.2122 \\
        $\hat{C^\epsilon_\Psi} (\delta = 0.125)$ & 0.8385 & 0.6215  \\
    \bottomrule
    \end{tabular}
    \caption{Values of empirical accuracy and stability constant obtained for iNN and StiNN reconstructors, trained  on noisy data with $\delta = 0.025$ and tested with different values of $\delta$.} 
    \label{tab:table_iNN}
\end{table}

%%%%%%%%%%%%%%%%%%%%%%%%%%%%%%%%%%%%%%%%%%%%%%%%%%%%%%
\subsection{Results with ReNN-based reconstructors}\label{sec:experiment_ReNN}

In this subsection, we focus on the application of the proposed ReNN reconstructor and its stabilized variant StReNN on noisy data characterized by $\delta=0.025$. It is important to recall that ReNN is trained following the methodology outlined in \eqref{eq:renn_training} and utilizes a dataset that does not include the exact $\x^{gt}$  images.

The target images are the output of Tikhonov reconstructor $\Psi^{\lambda, \boldsymbol{L}}$ applied to the data $\y^\delta_i, i=1 \ldots N_\D $. The Tikhonov regularization parameter $\lambda$ has been heuristically chosen as $\lambda=0.31$ to obtain a  small reconstruction error on the  training set.
The methods have been tested on noisy data with $\delta=0.025, 0.060, 0.125$, respectively. 
The outcomes obtained in terms of accuracy and stability constants are reported in Table \ref{tab:table_ReNN}. In the final column of this table, we also include the metrics pertaining to the Tikhonov reconstructor. It is observed that the accuracy of the three methods is quite comparable. 
Notably, the stability of the regularized NN-based reconstructors surpasses that of the Tikhonov method. Furthermore, the application of stabilization to ReNN exhibits increasingly beneficial effects as the noise level in the data escalates, as evidenced in the table's last row.

\begin{table}
    \centering
    \begin{tabular}{l ccc}
    \toprule
                    & ReNN & StReNN & Tik \\
     \midrule
        $\hat{\eta}^{-1}$  & 0.0461 & 0.0420 & 0.0474 \\
        {$\hat{C^\epsilon_\Psi} (\delta = 0.025)$}& 0.0270 & 0.0150 & 0.0614 \\
        {$\hat{C^\epsilon_\Psi} (\delta = 0.060)$} & 0.0739 & 0.0588 & 0.1490 \\
        {$\hat{C^\epsilon_\Psi} (\delta = 0.125)$} & 0.2261 & 0.1702 & 0.2822 \\
    \bottomrule
    \end{tabular}
    \caption{Values of empirical accuracy and stability constant obtained for ReNN, StReNN, and Tikhonov reconstructors, trained on noisy data with $\delta = 0.025$ and tested with different values of $\delta$.}
    \label{tab:table_ReNN}
\end{table}

\subsection{Comparison among reconstructors}\label{sec:experiment_all}

In this final subsection we provide an overview of the results and compare the NN-based reconstructors with the ReNN-based ones.
Figure \ref{fig:stabilityB} shows the output images of the reconstructors trained on noisy data $\y^\delta_i$, $\delta=0.025$ and tested on noisy data with $\delta=0.060$.
As previously observed, the stabilization technique is effective as demonstrated by the image quality and the SSIM value. Interestingly, comparing iNN and ReNN we observe that the ReNN output images inherit smoothness from the regularized images used as target in \eqref{eq:renn_training}, and exhibits a higher SSIM.
At last, ReNN also outperforms Tikhonov reconstructor in terms of SSIM. 

In Figure \ref{subfig:error_ex_B} we plot the reconstruction error of the methods  for increasing value of $\delta \in [0, 0.1]$. The value of $\delta=0.025$ used in the training is indicated with a star marker.
It is discernible that the blue iNN curve demonstrates a markedly steeper gradient, commencing from the minimal error value and escalating to the maximal. The red plot, representing StiNN,  intersects the blue iNN curve at approximately $\delta=0.055$, indicating a more stable behavior at higher noise levels.
The remaining three curves, corresponding to the regularized approaches ReNN, StReNN, and Tikhonov, exhibit similar slopes and behaviors. They manifest elevated errors for smaller values of $\delta$, yet surpass the performance of iNN when $\delta > 0.07$, yielding results comparable to those of StiNN.
Finally,  Figure \ref{subfig:error_ex_B} presents the boxplots of the experimental accuracy achieved across $T=20$ executions with varied random realizations.
The limited variance in these plots indicates that the values of $\hat{\eta}^{-1}$   are remarkably consistent for each individual reconstructor, thereby affirming the robustness of our accuracy definition.

\begin{table}\sffamily
\centering
\begin{tabular}{ccc}
$\y^\delta$ & iNN & StiNN \\
\textit{(SSIM = 0.4484)} & \textit{(SSIM = 0.6063)} & \textit{(SSIM = 0.7203)} \\
\includegraphics[width=0.28\textwidth,trim=50 10 20 60,clip]{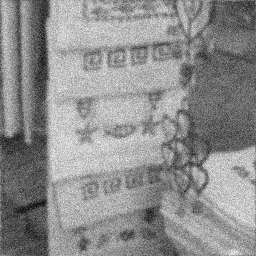 } 
&  \includegraphics[width=0.28\textwidth,trim=50 10 20 60,clip]{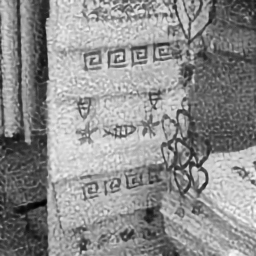} 
& \includegraphics[width=0.28\textwidth,trim=50 10 20 60,clip]{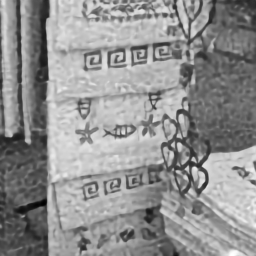} \vspace{2mm} \\
Tik & ReNN & StReNN \\
\textit{(SSIM = 0.6137)} & \textit{ (SSIM = 0.6841)} & \textit{(SSIM = 0.7174)} \\
\includegraphics[width=0.28\textwidth,trim=50 10 20 60,clip]{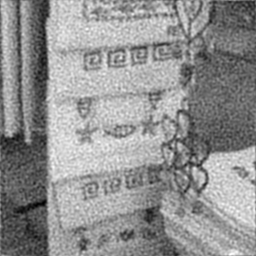} 
& \includegraphics[width=0.28\textwidth,trim=50 10 20 60,clip]{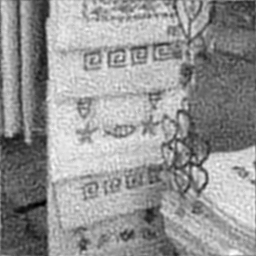} 
& \includegraphics[width=0.28\textwidth,trim=50 10 20 60,clip]{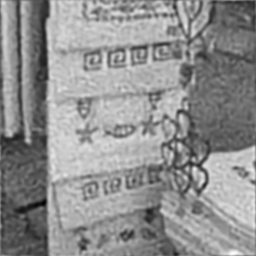} \\
\end{tabular}
\captionof{figure}{Blurred noisy input image $\y^{\delta}$ ($\delta = 0.06$) on the top left and  examples of reconstruction obtained by the iNN, StiNN, ReNN, StReNN and Tikhonov methods on a test image.}
\label{fig:stabilityB}
\end{table} 

% \begin{table}\sffamily
% \centering
% \begin{tabular}{ccc}
% $\y^\delta$ & IS & iNN \\
% \textit{(SSIM = 0.4484)} & \textit{(SSIM = 0.6137)} & \textit{(SSIM = 0.6063)} \\
% \includegraphics[width=0.28\textwidth,trim=50 10 20 60,clip]{fig/visual_results/corr_image_025_eps_035.png } 
% & \includegraphics[width=0.28\textwidth,trim=50 10 20 60,clip]{fig/visual_results/recon_is_025_eps_0.035.png} 
% &  \includegraphics[width=0.28\textwidth,trim=50 10 20 60,clip]{fig/visual_results/recon_nn_025_eps_0.035.png}  \vspace{2mm}\\ 
% ReNN & StiNN & StReNN \\
% \textit{(SSIM = 0.6841)} & \textit{(SSIM = 0.7203)} & \textit{(SSIM = 0.7174)} \\
% \includegraphics[width=0.28\textwidth,trim=50 10 20 60,clip]{fig/visual_results/recon_renn_025_eps_0.035.png} 
% & \includegraphics[width=0.28\textwidth,trim=50 10 20 60,clip]{fig/visual_results/recon_stnn_025_eps_0.035.png} 
% & \includegraphics[width=0.28\textwidth,trim=50 10 20 60,clip]{fig/visual_results/recon_strenn_025_eps_0.035.png} \\
% \end{tabular}
% \captionof{figure}{Blurred noisy input image $\y^{\delta}$ ($\delta = 0.06$) on the top left and  examples of reconstruction obtained by the NN, \textcolor{blue}{ReNN, StNN} and StReNN methods on a test image of Experiment B.}
% \label{fig:stabilityB}
% \end{table} 

\begin{figure}[h]
\centering
\begin{subfigure}{.45\textwidth}
    \includegraphics[width=\textwidth]{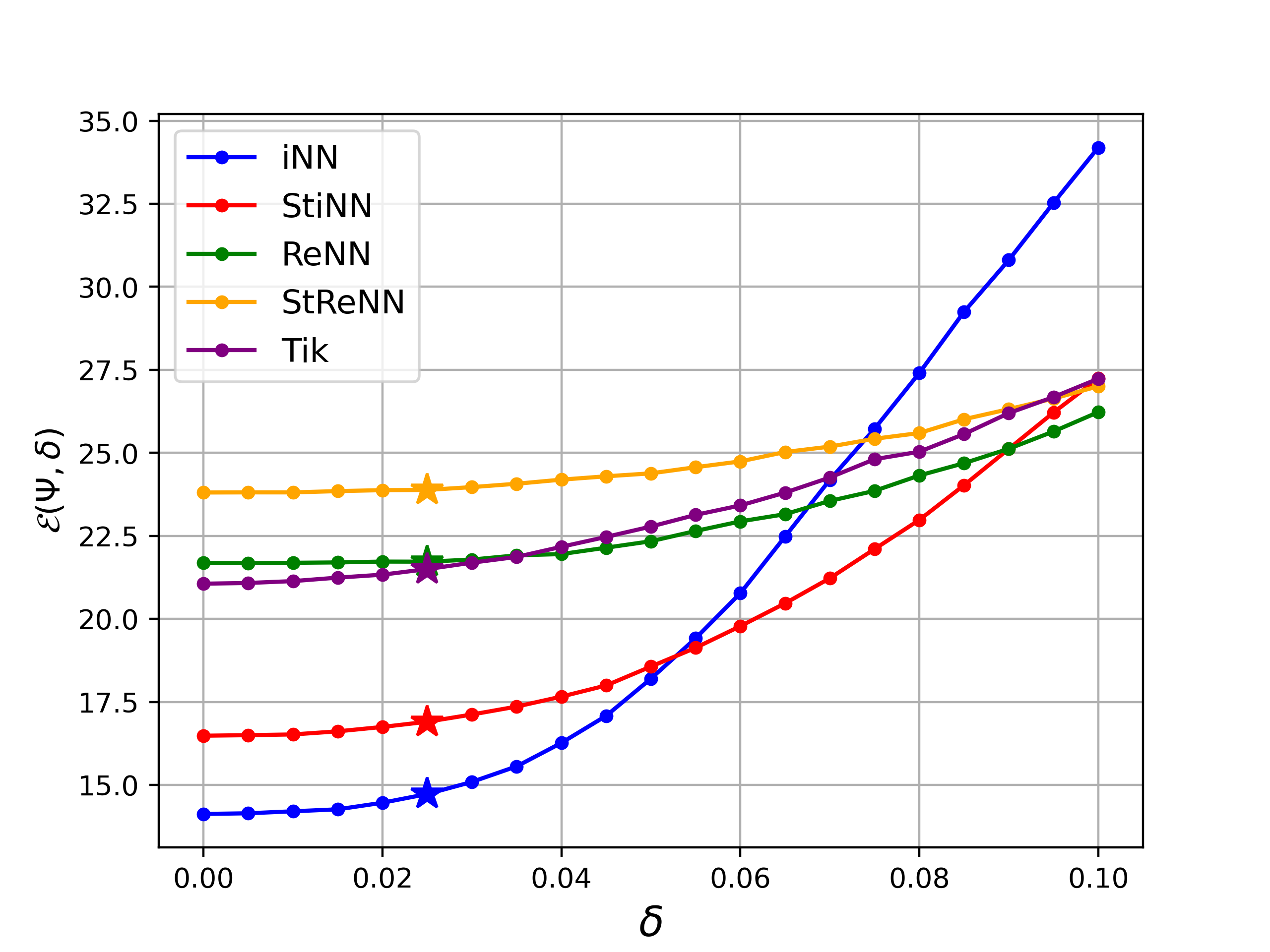}
    \caption{}
    \label{subfig:error_ex_B}
\end{subfigure}
\begin{subfigure}{.45\textwidth}
    \includegraphics[width=\textwidth]{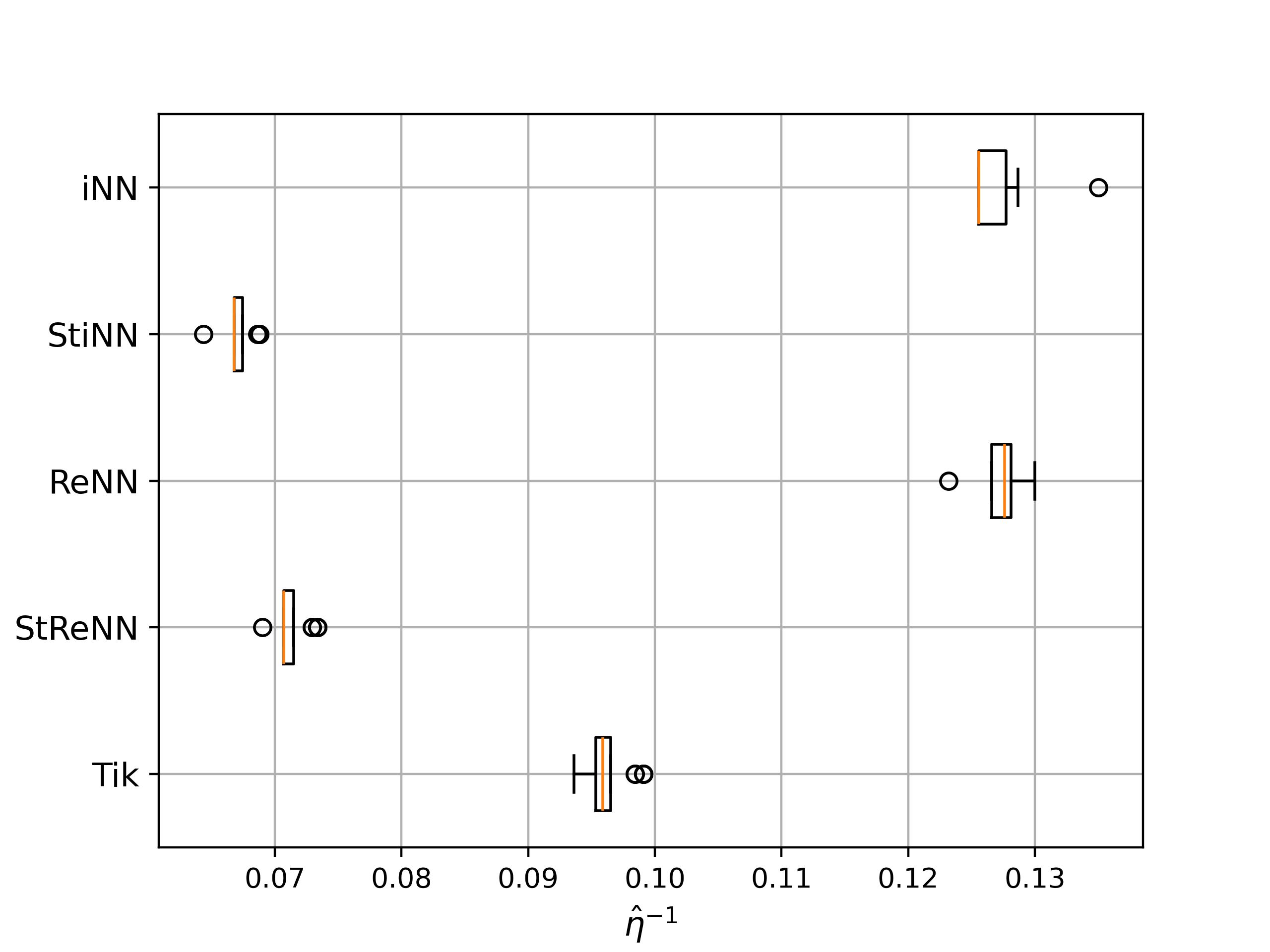}
    \caption{}
    \label{subfig:error_ex_BB}
\end{subfigure}
\caption { (a) Plots of the empirical error yielded by iNN, StiNN, ReNN, StReNN and Tikhonov reconstructors for increasing values of $\delta$ in the test images. (b) Boxplots over the $T=20$ executions.}
\label{fig:results_ex_B}
\end{figure}

\section{Conclusions}\label{sec:concl} 

In this paper, we conducted a comprehensive theoretical analysis of a broad spectrum of reconstructors for addressing a discrete ill-posed inverse problem with noisy data. Our findings, particularly encapsulated in Theorem \ref{thm:stab_vs_acc_tradeoff}, establish that enhancing stability in these reconstructors invariably leads to a decrease in accuracy. Our focus was primarily on reconstructors that leverage neural networks.

In consideration of the trade-off theorem, our objective was to enhance the stability of reconstructors based on deep learning, while preserving their accuracy as much as possible. We based our analysis on the reconstructors represented by the popular end-to-end NN approach for image restoration and we also considered the extensively utilized noise injection stabilization technique, here referred to as iNN. 
As is commonly understood, these  approaches are trained using datasets that include images with known ground truth.

We have proposed new deep learning-based approaches: (i) an additional reconstructor, ReNN, which is trained on noisy images and increases the stability of NN by inheriting regularization from a model-based scheme in its training; (ii) a stabilization technique  which stabilizes the solving process by reducing the impact of the noise with few iterations of a model-based algorithm and it is applied to all the proposed reconstructors resulting in StNN, StiNN and  StReNN.
 
We performed extensive numerical experiments on image deblurring and denoising, with results serving to substantiate the theoretical framework presented in our study.
Firstly, we observe, from Table \ref{tab:table_NN}, Table \ref{tab:table_iNN} and Table \ref{tab:table_ReNN}, that 
% 2: ReNN più stabile di iNN
% 3. Tik con accuratezza comparable a ReNN ma meno stabile (tab 6.3) anche per rumori non eccessivamente alti
the introduction of the proposed stabilizers reduces the stability constants of $99.6\%$ in  StNN, of about  $50\%$ in StiNN and StReNN, with a minimal accuracy loss of about $10-20\%$ in StiNN and StReNN and of about $50\%$ in StNN.
Secondly, in cases where only noisy data are available and ground truth images are not accessible, the ReNN approach performs exceptionally well and represents a more stable alternative compared to the Tikhonov reconstructor, as demonstrated by Figure \ref{tab:table_ReNN} and Figure \ref{fig:results_ex_B}. ReNN outperforms even NN when noise impacts the data.

We believe that this new approach for solving noisy linear inverse problems with stable deep learning-based tools is relevant in this field. It can be further theoretically extended to more general problems and formally applied in real imaging applications, as, for example,  in \cite{evangelista2023ambiguity}.

\subsection*{Funding}
This work has been partially supported by the GNCS - Gruppo Nazionale per il Calcolo Scientifico [{\it ''Apprendimento automatico e tecniche variazionali per la tomografia'' INdAM GNCS Project}, grant code CUP\_E55F55000270001] and by the U.S. National Science Foundation, grant codes DMS-2038118 and DMS-2208294.

% --------------------------------------------------------------
%                   Bibliography
% --------------------------------------------------------------
\nocite{*}
\bibliographystyle{plain}
\bibliography{biblio}
% \printbibliography

% --------------------------------------------------------------
%                   Appendix
% --------------------------------------------------------------
% \input{appendix}

\end{document}